\newtheorem{thm}{Theorem}[section]
\newtheorem{cor}[thm]{Corollary}
\newtheorem{lem}[thm]{Lemma}
\newtheorem{defn}{Definition}[section]
\theoremstyle{remark}
\newtheorem{exam}{Example}[section]
\begin{document}

\title
{\bf{Normalized Laplacian spectra of central vertex join and central edge join of graphs}}
\author {\small Jahfar T K \footnote{jahfartk@gmail.com} and Chithra A V \footnote{chithra@nitc.ac.in} \\ \small Department of Mathematics, National Institute of Technology, Calicut, Kerala, India-673601}
\date{ }
\maketitle
\begin{abstract}
	In this paper, we compute normalized Laplacian spectra of central graph of a  regular graph, central vertex join and central edge join of two regular graphs. Also, we determine their Kemeny's constant and degree Kirchhoff index. 
\end{abstract}

\hspace{-0.6cm}\textbf{AMS classification}: 05C50
\newline
\\
{\bf{Keywords}}: {\it{Central graph, normalized Laplacian spectrum, central vertex join, central edge join, Kirchhoff index, Kemeny's constant.}}

\section{Introduction}
Let $G=(V(G),E(G))$ be  a simple undirected graph with vertex set $V(G)=\{v_{1}, v_{2},...,v_{n}\}$ and its edge set $E(G)=\{e_{1}, e_{2},...,e_{m}\}.$  The degree $d_i$ of the vertex $v_i\in V(G)$ is the number of vertices adjacent to $v_i$. The adjacency matrix $A(G)=[a_{ij}]$ of the graph $G$ is a square symmetric matrix of order $n$  whose $(i,j)^{th}$ entry is defined as,
$$a_{ij}=\begin{cases}
1,\text{if $v_i$ and $v_j$ are adjacent in $G$},\\
0,  \text{otherwise}.
\end{cases}.$$The Laplacian matrix of $G$, denoted by $L(G)$ is defined as $L(G)=D(G)-A(G)$, where $D(G)$ is the diagonal matrix with vertex degrees. The characteristic polynomial of the $n\times n$ matrix $M$ of $G$ is defined as $f(M,x)=|xI_n-M|$, where $I_n$ is the identity matrix of order $n$.  Eigenvalues of $A(G)$ are denoted by $\lambda_1(G)\geqslant\lambda_2(G)\geqslant...\geqslant\lambda_n(G).$ Eigenvalues of $A(G)$ together with their multiplicities are called the spectrum of $G$.
 The normalized Laplacian matrix $\mathcal{L}(G)=(\mathcal{L}_{ij})$ of $G$ is a square symmetric matrix of order $n$ whose $(i,j)^{th}$ entry is defined as,
\[\mathcal{L}_{ij}=\begin{cases}
1, \text{if $v_i=v_j$ and $d_i\ne 0$},\\
-\frac{1}{\sqrt{d_id_j}},\text{if $v_i$ and $v_j$ are adjacent in $G$},\\
0,  \text{otherwise}.
\end{cases}.\] The normalized Laplacian matrix $\mathcal{L}(G)$ of $G$ is defined as   $$ \mathcal{L}(G)=I_n- D^{-\frac{1}{2}}(G)A(G)D^{-\frac{1}{2}}(G).$$ 
The eigenvalues of the matrix $\mathcal{L}(G)$ are called the normalized Laplacian eigenvalues of $G$, and will be denoted by $0=\tilde{\mu}_1(G)<\tilde{\mu}_2(G)\leq ...\leq\tilde{\mu}_n(G).$ The eigenvalues of $\mathcal{L}(G)$ together with their multiplicities are called the normalized Laplacian spectrum of $G$. If $G$ is an $r$-regular graph, then the adjacency and the normalized Laplacian eigenvalues of $G$ are related by $\tilde{\mu}_i(G)=1-\frac{\lambda_i(G)}{r}$\cite{gutman2014randic,bozkurt2010randic}. Two graphs are said to be $\mathcal{L}$-cospectral, if they have the same $\mathcal{L}$-spectrum. Otherwise, they are non $\mathcal{L}$-cospectral graphs.
 In 1993, Klein and Randi{\'{c}} \cite{klein1993resistance} introduced the resistance distance between two vertices $v_i$ and $v_j$ in $G$, denoted by $r_{i,j}^*$, is defined as the effective resistance between them when unit resistors are distributed on every
edge of G. The degree  Kirchhoff index of a connected graph $G$ is defined by Chen et al. in \cite{chen2007resistance} as $$Kf^*(G)=\sum_{i<j}^{}d_ir_{i,j}^*d_j,$$ where $r_{i,j}^*$ denotes the resistance distance between vertices $v_i$ and $v_j$ in  $G$.  In \cite{chen2007resistance}, the authors proved that $$ Kf^*(G)=2m\sum_{i=2}^{n}\frac{1}{\tilde{\mu}_i(G)}$$
The Kemeny's constant $K(G)$ of a graph $G$ \cite{butler2016algebraic,hunter2014role} is defined as the expected number of steps from a randomly chosen starting point in the graph to a randomly chosen destination point in the graph. It is defined in terms of the  normalized Laplacian eigenvalues of $G$ as $$K(G)=\sum_{i=2}^{n}\frac{1}{\tilde{\mu}_i(G)}.$$
The various  application of Kemeny's constant
to perturbed Markov chains, random walks on directed graphs are studied in \cite{hunter2014role}.
We recall that for two matrices $A$ and $B$, of the same size, the Hadamard product  $A \circ B$ of $A$ and $B$ is a matrix of the same size with entries given by $(A \circ B)_{i j} =(A)_{i j}.(B)_{i j}$  (that is entry wise multiplication)\cite{horn1991topics}. The incidence matrix of $G$, $I(G)$ is the $n\times m$ matrix whose $(i,j)^{th}$ entry is 1 if $v_i$ is incident to $e_j$ and 0 otherwise.
  It is  known \textnormal{\cite{cvetkovic1980spectra}} that, $I(G)I(G)^T=A(G)+D(G)$. In particular, $G$ is an $r$-regular graph, then  $I(G)I(G)^T=A(G)+rI_n$. Also,  $I(G)I(G)^T=2rI_{n}-r\mathcal{L}(G)$\cite{cvetkovic2010introduction}. The adjacency matrix of the complement of a graph $G$ is $A(\bar{G}) =J_n-I_n-A(G)$. Throughout the paper for any positive integer m, $I_m$ denotes the $m\times m$  identity matrix, $J_m$ denotes the $m\times m$ matrix whose all entries equal to one and  $J_{m\times 1}$ denotes the column vector of size $m$ with all entries equal to one. For any positive integers $p$ and $q$, $O_{p\times q}$ denotes the zero matrix of size $p\times q$ and $K_{p,p}$ is the complete bipartite graph with $2p$ vertices.
 \begin{defn}\textnormal{\cite{vivin2008harmonious}}
	Let $G$ be a simple graph with $n$ vertices and $m$ edges. The central graph of $G$, denoted by $C(G)$  is obtained by subdividing each edge of $G$ exactly once and joining all the nonadjacent vertices in $G$.\\ The number of vertices and edges in $C(G)$ are $m+n$ and $m+\frac{n(n-1)}{2}$ respectively.
\end{defn}
\par The rest of the paper is organized as follows. In Section 2, we present necessary  definitions and known results, which will be used later. In Section 3, we calculate the normalized Laplacian spectrum of central graph of regular graph. In Section 4, normalized Laplacian spectra of central vertex join and central edge join of two regular graphs are obtained. In Section 5, some $\mathcal{L}$-cospectral non-regular graphs are constructed. Also, we investigate some graph invariants like the degree Kirchhoff index and the Kemeny's constant   of central vertex join and central edge join of two regular graphs.
 \section{Preliminaries}
In this section, some definitions and results are given  which will be useful to prove our main results. 
\begin{lem}\textnormal{\cite{cvetkovic1980spectra}}
	Let $U,V,W$ and $X$ be matrices with $U$  invertible. Let \[S=\begin{pmatrix}
	U&V\\
	W&X
	\end{pmatrix}. 
	\]  Then $det(S)=det(U)det(X-WU^{-1}V)$.\\ If $X$ is invertible,  then $det(S)=det(X)det(U-VX^{-1}W).$ If $U$ and $W$ are commutes, then $det(S)=det(UX-WV)$.
\end{lem}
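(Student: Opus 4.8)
The plan is to prove all three identities by exhibiting explicit block-triangular factorizations of $S$ and invoking the elementary fact that the determinant of a block-triangular matrix (with square diagonal blocks) is the product of the determinants of those diagonal blocks. For the first identity, since $U$ is invertible I would write
\[
\begin{pmatrix} U & V \\ W & X \end{pmatrix}
=
\begin{pmatrix} U & 0 \\ W & I \end{pmatrix}
\begin{pmatrix} I & U^{-1}V \\ 0 & X-WU^{-1}V \end{pmatrix},
\]
which is checked by carrying out the block multiplication on the right-hand side. Taking determinants and using that each factor on the right is block triangular yields $\det(S)=\det(U)\,\det(X-WU^{-1}V)$.

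For the second identity, now assuming $X$ invertible, I would use the companion factorization
\[
\begin{pmatrix} U & V \\ W & X \end{pmatrix}
=
\begin{pmatrix} I & VX^{-1} \\ 0 & I \end{pmatrix}
\begin{pmatrix} U-VX^{-1}W & 0 \\ W & X \end{pmatrix},
\]
again verified by direct multiplication, and the same determinant argument gives $\det(S)=\det(X)\,\det(U-VX^{-1}W)$.

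For the third identity, assume all four blocks are square of the same order and that $UW=WU$. When $U$ is invertible the first identity already gives
\[
\det(S)=\det(U)\,\det(X-WU^{-1}V)=\det\!\bigl(UX-UWU^{-1}V\bigr)=\det(UX-WV),
\]
where the last step uses $UWU^{-1}=W$, i.e.\ exactly the commutativity hypothesis. To drop the invertibility assumption I would perturb: let $S_t$ be $S$ with $U$ replaced by $U+tI$; then $U+tI$ is invertible for all but finitely many scalars $t$ and still commutes with $W$, so $\det(S_t)=\det\bigl((U+tI)X-WV\bigr)$ for those $t$. Both sides are polynomials in $t$ agreeing at infinitely many points, hence are identically equal; setting $t=0$ completes the proof.

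The block factorizations are routine to verify, so the only genuinely delicate point is the non-invertible case in the third identity; the perturbation/polynomial-identity device is the cleanest way around it, and it costs nothing here since all matrices arising from the graphs under consideration are real, so the ground field is infinite.
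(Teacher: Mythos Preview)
Your proof is correct and follows the standard Schur-complement approach via block-triangular factorizations; the perturbation argument for the non-invertible case in the third identity is handled cleanly. Note, however, that the paper does not supply its own proof of this lemma: it is quoted as a known result from \cite{cvetkovic1980spectra} and used as a black box, so there is no in-paper argument to compare against.
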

\begin{lem}\textnormal{\cite{cvetkovic1980spectra}}
	Let $G$ be a connected $r$-regular graph on $n$ vertices with  adjacency matrix $A$ having $t$ distinct eigenvalues $r=\lambda_1,\lambda_2,...,\lambda_t$.  Then there exists a polynomial \[P(x)=n\dfrac{(x-\lambda_2)(x-\lambda_3)...(x-\lambda_t)}{(r-\lambda_2)(r-\lambda_3)...(r-\lambda_t)}.\] such that $P(A)=J_n, P(r)=n$ and $P(\lambda_i) =0$ for $\lambda_i\neq r.$
\end{lem}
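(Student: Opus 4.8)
The plan is to verify the three assertions in increasing order of difficulty. The scalar identities $P(r)=n$ and $P(\lambda_i)=0$ for every eigenvalue $\lambda_i\neq r$ are immediate: at $x=r$ the displayed fraction equals $1$, while at $x=\lambda_i$ with $i\geq 2$ the factor $(x-\lambda_i)$ in the numerator vanishes. So the only substantive point is the matrix identity $P(A)=J_n$.

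For that, I would invoke the spectral decomposition of the real symmetric matrix $A$, namely $A=\sum_{i=1}^{t}\lambda_i E_i$, where $E_i$ is the orthogonal projection onto the $\lambda_i$-eigenspace, with $E_iE_j=\delta_{ij}E_i$ and $\sum_i E_i=I_n$. Applying the polynomial $P$ then gives $P(A)=\sum_{i=1}^{t}P(\lambda_i)E_i$, and by the first step every term with $i\geq 2$ drops out, leaving $P(A)=P(r)E_1=nE_1$.

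It remains to identify $E_1$ with $\tfrac1n J_n$. Since $G$ is $r$-regular, $AJ_{n\times 1}=rJ_{n\times 1}$, so $J_{n\times 1}$ lies in the $r$-eigenspace; since $G$ is connected, the Perron--Frobenius theorem (equivalently, the fact that a top eigenvector of a connected nonnegative symmetric matrix has no zero entry and no sign change) forces this eigenspace to be one-dimensional, hence equal to $\sspan\{J_{n\times 1}\}$. The orthogonal projection onto this line is $\|J_{n\times 1}\|^{-2}J_{n\times 1}J_{n\times 1}^{T}=\tfrac1n J_n$, so $P(A)=n\cdot\tfrac1n J_n=J_n$, as claimed.

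The one place where the hypotheses genuinely enter — and the only step where something could go wrong — is the simplicity of the eigenvalue $r$, which requires connectedness; without it, $E_1$ would be a larger projection and $P(A)$ would fail to equal $J_n$. Everything else is routine linear algebra. (Alternatively, one can note that $J_n\in\mathbb{R}[A]$ because $AJ_n=J_nA=rJ_n$, and then single out the unique element of $\mathbb{R}[A]$ taking the prescribed values on the spectrum by Lagrange interpolation; the projection argument above is shorter.)
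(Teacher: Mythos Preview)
Your argument is correct. The scalar identities are indeed trivial, and the spectral-decomposition step $P(A)=\sum_i P(\lambda_i)E_i=nE_1$ together with the identification $E_1=\tfrac1n J_n$ (via regularity plus connectedness to force simplicity of the Perron eigenvalue $r$) is exactly the standard way to establish $P(A)=J_n$.

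There is nothing to compare against: the paper does not supply a proof of this lemma but merely quotes it from Cvetkovi\'c--Doob--Sachs \cite{cvetkovic1980spectra}, where it appears as a standard fact about Hoffman polynomials. Your write-up would serve perfectly well as a self-contained justification if one wished to include one.
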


\begin{defn}\textnormal{\cite{das2018spectra}}
	The $M$-coronal $\chi_M(x)$  of $n\times n$ matrix $M$ is defined as the sum of the entries of the matrix $(xI_n-M)^{-1}$ $($if exists$)$, that is, $$\chi_M(x)=J_{n\times 1}^T(xI_n-M)^{-1}J_{n\times 1}.$$
	Throughout this, we use the notation $\chi_M(x,\alpha)=J_{n\times 1}^T(xI_n-M\circ(\alpha J_n+(1-\alpha)I_n))^{-1}J_{n\times 1}$, as in \textnormal{\cite{tian2017normalized}}, if $\alpha = 1$, then $\chi_M(x, 1)$ is the usual
	M-coronal $\chi_M(x)$ . \\Let G be an $r$-regular graph of order n. Then
	$\chi_{\mathcal{L}(G)}(x,\alpha) = \frac{n}{x + \alpha-1}$ \textnormal{\cite{tian2017normalized}}.
\end{defn}
\begin{lem}\textnormal{\cite{mcleman2011spectra}}
	Let $G$ be an $r$-regular graph on $n$  vertices,  then $\chi_A(x)=\frac{n}{x-r}$ .
\end{lem}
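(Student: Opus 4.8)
The plan is to use the single structural fact available about an $r$-regular graph: the all-ones vector is an eigenvector of its adjacency matrix. First I would note that since every row of $A=A(G)$ sums to the common degree $r$, we have $A\,J_{n\times 1}=r\,J_{n\times 1}$. Hence for any scalar $x$ that is not an eigenvalue of $A$ (in particular $x\neq r$), the matrix $xI_n-A$ is invertible and $J_{n\times 1}$ is an eigenvector of $xI_n-A$ with eigenvalue $x-r$, so that
\[
(xI_n-A)^{-1}J_{n\times 1}=\frac{1}{x-r}\,J_{n\times 1}.
\]

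Next I would substitute this into the definition of the $M$-coronal with $M=A$ given just above:
\[
\chi_A(x)=J_{n\times 1}^{T}(xI_n-A)^{-1}J_{n\times 1}=\frac{1}{x-r}\,J_{n\times 1}^{T}J_{n\times 1}=\frac{n}{x-r},
\]
using $J_{n\times 1}^{T}J_{n\times 1}=n$. This finishes the computation, and since both sides are rational functions of $x$, the identity $\chi_A(x)=\frac{n}{x-r}$ holds wherever $\chi_A(x)$ is defined.

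There is essentially no obstacle here; the only point requiring a word of care is that $(xI_n-A)^{-1}$ exists precisely off the spectrum of $A$, so the manipulation above is carried out for such $x$ and then the closed form is read off. One could alternatively decompose $\mathbb{R}^n$ as the span of $J_{n\times 1}$ together with its orthogonal complement (which is $A$-invariant by symmetry of $A$) and observe that the coronal only sees the $J_{n\times 1}$-component, but the one-line eigenvector argument above is the cleanest route.
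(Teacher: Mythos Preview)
Your argument is correct and is the standard one-line proof: regularity gives $AJ_{n\times1}=rJ_{n\times1}$, whence $(xI_n-A)^{-1}J_{n\times1}=(x-r)^{-1}J_{n\times1}$ and the coronal evaluates to $n/(x-r)$. There is nothing to compare against, since the paper does not supply its own proof of this lemma; it is quoted from \cite{mcleman2011spectra} and stated without argument.
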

For Laplacian matrix each row sum is zero, so $\chi_L(x)=\frac{n}{x}$.\\Note that $\chi_{kL}(x)= \chi_L(x)$ for any real number  k.
\begin{lem}\textnormal{\cite{liu2019spectra}}
	Let $A$  be an $n\times n$ real matrix. Then $det(A+\alpha J_{n})=det(A)+\alpha J_{n\times 1}^T adj(A)J_{n\times 1}$, where $\alpha $  is a real number and $adj(A)$  is the adjoint of $A$ . 
\end{lem}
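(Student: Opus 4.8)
The plan is to realize $\alpha J_n$ as a rank-one perturbation of $A$ and to compute the determinant of a single bordered matrix in two different ways. Writing $J_{n\times 1}$ for the all-ones column vector, so that $J_n = J_{n\times 1}J_{n\times 1}^{T}$, I would introduce the $(n+1)\times(n+1)$ block matrix
\[
S=\begin{pmatrix} A & -J_{n\times 1}\\ \alpha J_{n\times 1}^{T} & 1\end{pmatrix}.
\]
First I would apply the Schur-complement identity from the first lemma of Section~2, using the invertible $1\times 1$ block $X=(1)$ in the lower-right corner: this gives $\det(S)=\det(1)\det\!\big(A-(-J_{n\times 1})(1)^{-1}(\alpha J_{n\times 1}^{T})\big)=\det(A+\alpha J_n)$, and this step needs no hypothesis on $A$.

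Next, under the temporary assumption that $A$ is invertible, I would expand $\det(S)$ the other way, taking $U=A$ in the same lemma:
\[
\det(S)=\det(A)\det\!\big(1-(\alpha J_{n\times 1}^{T})A^{-1}(-J_{n\times 1})\big)=\det(A)\big(1+\alpha J_{n\times 1}^{T}A^{-1}J_{n\times 1}\big).
\]
Distributing and using the standard relation $\mathrm{adj}(A)=\det(A)\,A^{-1}$ rewrites the right-hand side as $\det(A)+\alpha J_{n\times 1}^{T}\,\mathrm{adj}(A)\,J_{n\times 1}$. Equating the two evaluations of $\det(S)$ yields the asserted identity for every invertible $A$.

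Finally, to remove the invertibility assumption, I would observe that both sides of the identity are polynomial — hence continuous — functions of the $n^2$ entries of $A$, since $\det A$, $\mathrm{adj}(A)$, and $J_n$ all have polynomial (indeed, entry-wise polynomial) dependence on $A$. Because the invertible matrices are dense in the space of all $n\times n$ real matrices (for instance $A+tI$ is invertible for all but finitely many $t$ and tends to $A$ as $t\to 0$), an identity of continuous functions that holds on this dense set holds everywhere. The only genuinely delicate point is this last density argument; the rest is a direct application of the block-determinant formula already recorded in Section~2 together with $\mathrm{adj}(A)=\det(A)A^{-1}$.
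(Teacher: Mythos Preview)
Your argument is correct. The paper itself does not supply a proof of this lemma --- it is simply quoted from \cite{liu2019spectra} --- so there is no in-paper proof to compare against. Your bordered-matrix/Schur-complement approach is a clean and standard derivation of the matrix-determinant lemma: computing $\det(S)$ via the $1\times1$ lower-right block gives $\det(A+\alpha J_n)$ unconditionally, while computing it via the upper-left block (when $A$ is invertible) gives $\det(A)\big(1+\alpha J_{n\times 1}^{T}A^{-1}J_{n\times 1}\big)$, and the identity $\mathrm{adj}(A)=\det(A)A^{-1}$ turns this into the stated form. The polynomial/density extension to singular $A$ is valid and standard; one could equally note that both sides are polynomials in the entries of $A$ that agree on the Zariski-dense set $\{\det A\neq 0\}$, hence agree identically.
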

\begin{cor}\textnormal{\cite{liu2019spectra}}
	Let $A$  be an $n\times n$  real matrix. Then \[det(xI_n-A-\alpha J_{n})=(1-\alpha \chi_A(x))det(xI_n-A).\] 
\end{cor}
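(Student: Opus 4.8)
The plan is to obtain this as an immediate consequence of the preceding Lemma via the substitution $A \mapsto xI_n - A$, $\alpha \mapsto -\alpha$. First I would note that for real $x$ the matrix $xI_n - A$ is again an $n\times n$ real matrix and $-\alpha$ is again a real scalar, so the Lemma applies verbatim and gives
\[
\det(xI_n - A - \alpha J_n) = \det(xI_n - A) - \alpha\, J_{n\times 1}^{T}\operatorname{adj}(xI_n - A)\, J_{n\times 1}.
\]

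Next I would rewrite the adjugate term using the standard identity $\operatorname{adj}(M) = \det(M)\,M^{-1}$, valid whenever $M$ is invertible. Taking $M = xI_n - A$ for any $x$ that is not an eigenvalue of $A$, and recalling the definition $\chi_A(x) = J_{n\times 1}^{T}(xI_n - A)^{-1}J_{n\times 1}$, we get
\[
J_{n\times 1}^{T}\operatorname{adj}(xI_n - A)\, J_{n\times 1} = \det(xI_n - A)\, J_{n\times 1}^{T}(xI_n - A)^{-1}J_{n\times 1} = \chi_A(x)\,\det(xI_n - A).
\]
Substituting this back and factoring out $\det(xI_n - A)$ yields $\det(xI_n - A - \alpha J_n) = \bigl(1 - \alpha\,\chi_A(x)\bigr)\det(xI_n - A)$, which is exactly the asserted identity.

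The only point that needs a word of care is that $\chi_A(x)$ is defined only at non-eigenvalues of $A$. I would address this by observing that the unreduced relation $\det(xI_n - A - \alpha J_n) = \det(xI_n - A) - \alpha\, J_{n\times 1}^{T}\operatorname{adj}(xI_n - A)\, J_{n\times 1}$ is a genuine polynomial identity in $x$ holding for every $x$, so nothing is lost; the statement of the corollary is merely its rational-function reformulation on the (dense, cofinite) set where $xI_n - A$ is invertible. I do not expect any real obstacle in this argument: it is a one-line substitution into the Lemma together with the elementary adjugate formula, and the eigenvalue caveat is handled by the polynomial-identity remark.
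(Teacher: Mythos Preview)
Your argument is correct. Note, however, that the paper does not supply its own proof of this corollary: it is quoted as a known result from \cite{liu2019spectra} and stated without proof, so there is no in-paper argument to compare against. Your derivation---substituting $A\mapsto xI_n-A$, $\alpha\mapsto -\alpha$ into the preceding Lemma and then using $\operatorname{adj}(M)=\det(M)\,M^{-1}$ together with the definition of $\chi_A(x)$---is exactly the intended one-line deduction, and your remark that the unreduced identity is a polynomial identity in $x$ (so the formula extends by continuity/density to all $x$ where $\chi_A(x)$ is defined) is the standard way to handle the invertibility caveat.
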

\begin{lem}\textnormal{\cite{das2018spectra}}
	For any real numbers $c,d>0,(cI_n-dJ_{n})^{-1}= \frac{1}{c}I_n+\frac{d}{c(c-nd)}J_{n}.$
\end{lem}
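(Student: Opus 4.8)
The plan is to verify the claimed identity directly, using only the elementary fact that $J_n^2 = nJ_n$ together with $I_nJ_n = J_nI_n = J_n$. First observe that the right-hand side is well defined precisely when $c \neq nd$; this is the implicit nondegeneracy hypothesis, and indeed for $c = nd$ the matrix $cI_n - dJ_n$ annihilates $J_{n\times 1}$ and is therefore singular. Write $B = \frac{1}{c}I_n + \frac{d}{c(c-nd)}J_n$ and expand the product $(cI_n - dJ_n)B$ term by term.

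The product splits into a scalar multiple of $I_n$ plus a scalar multiple of $J_n$. The $I_n$-part is simply $cI_n \cdot \frac{1}{c}I_n = I_n$. The three remaining terms are all multiples of $J_n$ (the $J_n\cdot J_n$ term collapses via $J_n^2 = nJ_n$), and the whole verification reduces to checking that their coefficients cancel: the coefficient of $J_n$ is $\frac{d}{c-nd} - \frac{d}{c} - \frac{nd^2}{c(c-nd)}$, and placing this over the common denominator $c(c-nd)$ yields numerator $dc - d(c-nd) - nd^2 = 0$. Hence $(cI_n - dJ_n)B = I_n$, and since the matrices are square this shows $B = (cI_n - dJ_n)^{-1}$.

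A slicker route, if one prefers, is to note that $J_n = J_{n\times 1}J_{n\times 1}^T$ has rank one, so $cI_n - dJ_n$ is a rank-one update of $cI_n$, and the Sherman--Morrison formula produces the inverse in one line, with the denominator $c - nd$ appearing as $c\bigl(1 - d\,J_{n\times 1}^T(c^{-1}I_n)J_{n\times 1}\bigr)$. Either way there is essentially no obstacle: the only points demanding any care are the standing assumption $c \neq nd$ and the routine bookkeeping of the $J_n$-coefficient.
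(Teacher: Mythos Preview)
Your verification is correct: the direct multiplication using $J_n^2 = nJ_n$ shows $(cI_n - dJ_n)B = I_n$, and your bookkeeping of the $J_n$-coefficient is accurate. Note that the paper itself does not supply a proof of this lemma---it is simply quoted from \cite{das2018spectra}---so there is no argument in the paper to compare against; your direct computation (or the Sherman--Morrison shortcut you mention) is exactly what one would expect for this standard rank-one-update identity.
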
 
\begin{defn}\textnormal{\cite{jahfar2020central}}
	Let $G_1$ and $G_2$ be any two graphs on $n_1, n_2$ vertices and $m_1, m_2$ edges respectively. The central vertex join of $G_1$ and $G_2$ is the graph $G_1\dot{\vee} G_2$ is obtained from $C(G_1)$ and $G_2$  by joining each vertex of $G_1$  with every vertex of $G_2$.\\ The graph $G_1\dot{\vee} G_2$ has $m_1+n_1+n_2$ vertices and $m_1+m_2+n_1n_2+\frac{n_1(n_1-1)}{2}$ edges.
\end{defn}
\begin{defn}\textnormal{\cite{jahfar2020central}}
	Let $G_i$ be any two graphs with $n_i$ vertices and $m_i$ edges respectively for ${i=1,2}.$ Then the central edge join of two graphs $G_1$ and $G_2$ is the graph $G_1\veebar G_2 $ is obtained from $C(G_1)$ and $G_2$  by joining each vertex corresponding to edges of $G_1$  with every vertex of $G_2$.\\The graph $G_1\veebar G_2 $ has $m_1+n_1+n_2$ vertices and  $m_1+m_2+m_1n_2+\frac{n_1(n_1-1)}{2}$ edges.
\end{defn}
\section{Normalized Laplacian spectra of central graphs}
In this section, we determine the normalized Laplacian spectrum of central graph of regular graphs.
\begin{thm}
	Let $G$ be an r-regular graph with $n$ vertices. Then the normalized Laplacian characteristic polynomial  of central graph of $G$ is 
	\begin{align*}
	 f(\mathcal{L}(C(G)),x)= & (x-1)^{m-n} \prod_{i=1}^{n}\left((x-1)\Big((x-1)-\frac{P(\lambda_i)-1-\lambda_i}{n-1}\Big)-{\frac{(\lambda_i+r)}{2(n-1)}} \right ). 
	\end{align*}
\end{thm}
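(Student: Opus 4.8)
The plan is to write down the normalized Laplacian $\mathcal{L}(C(G))$ in block form according to the bipartition of the vertex set of $C(G)$ into the $n$ original vertices of $G$ and the $m$ subdivision vertices, and then reduce the $(m+n)\times(m+n)$ determinant $\det(xI_{m+n}-\mathcal{L}(C(G)))$ to an $n\times n$ determinant via the Schur complement (Lemma 2.1). In $C(G)$ each original vertex has degree $(n-1)+r$ (it is joined to all $n-1$ non-neighbours and sits on $r$ subdivided edges), while each subdivision vertex has degree $2$. So $\mathcal{L}(C(G))$ has the shape
\[
\mathcal{L}(C(G))=\begin{pmatrix} I_n-\frac{1}{n-1+r}A(\bar G) & -\frac{1}{\sqrt{2(n-1+r)}}\,I(G)\\[4pt] -\frac{1}{\sqrt{2(n-1+r)}}\,I(G)^T & I_m\end{pmatrix},
\]
using $A(\bar G)=J_n-I_n-A(G)$ for the "join the non-adjacent vertices" part and the incidence matrix $I(G)$ for the subdivision part.

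Next I would form $xI_{m+n}-\mathcal{L}(C(G))$, note the bottom-right block $(x-1)I_m$ is invertible for $x\neq 1$, and apply the second identity of Lemma 2.1 to get
\[
f(\mathcal{L}(C(G)),x)=(x-1)^m\det\!\Big((x-1)I_n-\tfrac{1}{n-1+r}(J_n-I_n-A)-\tfrac{1}{2(n-1+r)(x-1)}\,I(G)I(G)^T\Big).
\]
Now I invoke the two facts recalled in the introduction: $I(G)I(G)^T=A+rI_n$ for $r$-regular $G$, and I substitute it in. Everything left is a polynomial in the single matrix $A$ together with $J_n$, so I can diagonalize $A$ simultaneously with this expression. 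On the $\lambda_i$-eigenspace the scalar in the determinant becomes $(x-1)+\tfrac{1+\lambda_i}{n-1+r}-\tfrac{\lambda_i+r}{2(n-1+r)(x-1)}$ plus the $J_n$ contribution; for $\lambda_i\neq r$ the $J_n$ term vanishes (eigenvectors orthogonal to the all-ones vector), and for $\lambda_1=r$ it contributes the full $-\frac{n}{n-1+r}$. To package all $n$ factors uniformly the clean device is Lemma 2.2: the polynomial $P$ with $P(A)=J_n$, $P(r)=n$, $P(\lambda_i)=0$ otherwise, lets me write the $J_n$-contribution on the $\lambda_i$-eigenline as $\tfrac{P(\lambda_i)}{n-1+r}$ in every case. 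Combining, the determinant factors as
\[
\prod_{i=1}^n\left((x-1)+\frac{1+\lambda_i-P(\lambda_i)}{n-1+r}-\frac{\lambda_i+r}{2(n-1+r)(x-1)}\right),
\]
and multiplying each factor by $(x-1)$ (absorbing $n$ of the $m$ powers of $(x-1)$ and leaving $(x-1)^{m-n}$ out front) plus clearing the denominator $n-1+r$ should reproduce the stated formula after identifying $n-1+r$ with $(n-1)$ times the normalization they use — I should double-check whether the paper's degree is $(n-1)+r$ and the factor of $(n-1)$ in their displayed answer comes from a slightly different grouping of terms; reconciling my $(n-1+r)$ with their $(n-1)$ is the one bookkeeping point to watch.

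The main obstacle is not conceptual but organizational: getting the Schur-complement reduction to collapse into a product over eigenvalues requires that the surviving $n\times n$ matrix be a polynomial in $A$ and $J_n$ alone, which hinges on the identity $I(G)I(G)^T=A+rI_n$ and on handling the $J_n$ term uniformly via Lemma 2.2 so that the $\lambda_1=r$ factor is not a special case. The second thing to be careful about is the domain $x\neq 1$ used when inverting $(x-1)I_m$; the factor $(x-1)^{m-n}$ in front and the two factors of $(x-1)$ inside each bracket mean the final expression is a genuine polynomial, so the formula extends to $x=1$ by continuity, but I would remark on this rather than leave it implicit. Everything else — expanding $A(\bar G)$, tracking the constant $1/\sqrt{2(n-1+r)}$, and simplifying — is routine algebra.
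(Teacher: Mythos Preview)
Your overall strategy---block form, Schur complement via Lemma~2.1, the identity $I(G)I(G)^T=A+rI_n$, and Lemma~2.2 to absorb the $J_n$ term---is exactly the paper's approach. The one substantive error is the degree count: in $C(G)$ an original vertex of $G$ has $n-1-r$ non-neighbours (not $n-1$), and is also joined to the $r$ subdivision vertices on its incident edges, giving total degree $(n-1-r)+r=n-1$. So the correct normalization is $1/(n-1)$ and $1/\sqrt{2(n-1)}$, precisely as in the paper, and the discrepancy you flagged at the end is not a ``grouping of terms'' issue but this miscount. Once you replace every $n-1+r$ by $n-1$, your computation goes through verbatim and matches the paper's.
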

\begin{proof}
	Let $I(G)$ be the incidence matrix of $G$ and $A(\bar{G})$ be the adjacency matrix of complement of graph $G$. Then the adjacency matrix of $C(G)$ is 
	\[A(C(G))=\begin{pmatrix}
	A(\bar G)&I(G)\\
	I(G)^T&O_{m\times m}\\
	\end{pmatrix}. 
	\] The  degree matrix of $C(G)$ is
	\[D(C(G))=\begin{pmatrix}
	(n-1)I_n&O_{n\times m}\\
	O_{m\times n}&2I_m\\
	\end{pmatrix}. 
	\] Then the normalized Laplacian  matrix of $C(G)$ is \[\mathcal{L}(C(G))=I_{m+n}-D^{-\frac{1}{2}}(C(G))A(C(G))D^{-\frac{1}{2}}(C(G))=\begin{pmatrix}
	I_n-\frac{A(\bar G)}{n-1}&-\frac{I(G)}{\sqrt{2(n-1)}}\\
	-\frac{I(G)^T}{\sqrt{2(n-1)}}&I_{m}\\
	\end{pmatrix}. 
	\]Using Lemmas 2.1 and 2.2, we have
	 the normalized Laplacian characteristic polynomial of $C(G)$ is 
\footnotesize 
\begin{align*}
f(\mathcal{L}(C(G)),x)=& det\begin{pmatrix}
(x-1)I_n+\frac{J_n-I_n-A(G)}{n-1}&\frac{I(G)}{\sqrt{2(n-1)}}\\
\frac{I(G)^T}{\sqrt{2(n-1)}}&(x-1)I_m\\
\end{pmatrix} \\ 
= & (x-1)^m det\left((x-1)I_n+\frac{J_n-I_n-A(G)}{n-1}-\frac{I(G)I(G)^T}{2(x-1)(n-1)} \right )\\ 
= & (x-1)^{m-n} det\left((x-1)\Big((x-1)I_n+\frac{J_n-I_n-A(G)}{n-1}\Big)-{\frac{I(G)I(G)^T}{2(n-1)}} \right )\\
= & (x-1)^{m-n} det\left((x-1)\Big((x-1)I_n+\frac{J_n-I_n-A(G)}{n-1}\Big)-{\frac{(A(G)+rI_n)}{2(n-1)}} \right )\\
f(\mathcal{L}(C(G)),x)= & (x-1)^{m-n} \prod_{i=1}^{n}\left((x-1)\Big((x-1)+\frac{P(\lambda_i)-1-\lambda_i}{n-1}\Big)-{\frac{(\lambda_i+r)}{2(n-1)}} \right).
\end{align*}
\end{proof}
The following corollary describes the complete normalized Laplacian spectrum of $C(G)$  when $G$ is  regular graph.
\begin{cor}
Let $G$ be an r-regular graph on $n$ vertices.  Then the normalized Laplacian spectrum of $C(G)$  consists of 
	\begin{enumerate}
		\item $1$ with multiplicity $m-n$ .
		\item Two roots of the equation $(x-1)\Big((x-1)+\frac{n-1-r}{n-1}\Big)-{\frac{r}{(n-1)}}=0.$
    	\item Two roots of the equation $(x-1)\Big((x-1)+\frac{-1-\lambda_i}{n-1}\Big)-{\frac{(\lambda_i+r)}{2(n-1)}}=0$  \textnormal{for} $i=2,...,n.$
	\end{enumerate}
\end{cor}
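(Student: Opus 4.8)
The plan is to read the spectrum straight off the factorization of $f(\mathcal{L}(C(G)),x)$ established in Theorem 3.1, the only input being the evaluation of the Hoffman polynomial $P$ of Lemma 2.2 at the adjacency eigenvalues of $G$. Recall that for a connected $r$-regular graph $P$ satisfies $P(r)=n$ and $P(\lambda_i)=0$ whenever $\lambda_i\neq r$. Since $G$ is connected, $r=\lambda_1$ is a simple eigenvalue, so $P(\lambda_1)=n$ while $P(\lambda_i)=0$ for $i=2,\dots,n$; this is exactly the dichotomy that produces the split between items (2) and (3). (If one allows $G$ to be disconnected, the Perron eigenvalue $r$ occurs with higher multiplicity and the list in (3) must be read with the corresponding bookkeeping; the substitution argument below is otherwise unchanged.)

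First I would isolate the scalar prefactor $(x-1)^{m-n}$ in Theorem 3.1: it contributes the eigenvalue $1$ with multiplicity $m-n$, which is item (1). Next, setting $i=1$, i.e.\ $\lambda_1=r$ and $P(\lambda_1)=n$, the $i$-th factor of the product becomes
$(x-1)\bigl((x-1)+\tfrac{n-1-r}{n-1}\bigr)-\tfrac{r}{n-1}$,
since $\tfrac{P(\lambda_1)-1-\lambda_1}{n-1}=\tfrac{n-1-r}{n-1}$ and $\tfrac{\lambda_1+r}{2(n-1)}=\tfrac{r}{n-1}$; its two roots are the two eigenvalues of item (2). Then, for each $i=2,\dots,n$, substituting $P(\lambda_i)=0$ turns the $i$-th factor into
$(x-1)\bigl((x-1)+\tfrac{-1-\lambda_i}{n-1}\bigr)-\tfrac{\lambda_i+r}{2(n-1)}$,
a quadratic in $x$ whose two roots are the eigenvalues listed in item (3) for that index.

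Finally I would check that the count is exact: the eigenvalues so produced total $(m-n)+2+2(n-1)=m+n$, which equals $|V(C(G))|$, so (1)--(3) list the entire normalized Laplacian spectrum of $C(G)$ with multiplicities. There is no genuine obstacle beyond this bookkeeping; the one place demanding care is the evaluation $P(\lambda_1)=n$ versus $P(\lambda_i)=0$ for $i\geq2$, together with the (mild) use of connectedness of $G$ to ensure $\lambda_i\neq r$ for $i\geq2$, and this is precisely what separates the quadratic in (2) from the quadratics in (3).
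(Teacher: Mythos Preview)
Your proposal is correct and follows exactly the approach the paper intends: the corollary is stated immediately after Theorem~3.1 without a separate proof, and is meant to be read off the factorization by substituting $P(\lambda_1)=n$ and $P(\lambda_i)=0$ for $i\geq 2$ from Lemma~2.2, precisely as you do. Your additional remarks on the eigenvalue count and on the role of connectedness (needed for Lemma~2.2 to apply) are accurate refinements that the paper leaves implicit.
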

\begin{exam}
	Let $G=K_{p,p}$. Then the adjacency  eigenvalues of $G$ are 0 (multiplicity $2p-2$) and $\pm p$. Therefore, the normalized Laplacian eigenvalues of $C(G)$ are 0, $\frac{3p-1}{2p-1}$,1(multiplicity $p^2-2p$),  roots of the equation $(2p-1)x^2+(1-3p)x+p=0$ and roots of the equation $(4p-2)x^2-(8p-2)x-3p=0$ (each root with multiplicity $2p-2$).\\
	In particular, $G=K_{3,3}$ . The normalized Laplacian eigenvalues of $C(G)$ are 0 ,  $\frac{8}{5}$,1(multiplicity 3), roots of the equation $5x^2-8x+3=0$ and roots of the equation $10x^2-22x+9=0$ (each root with multiplicity 4). 
\end{exam}	 
\section{Normalized Laplacian spectra of central vertex join and central edge join of Graphs}
In this section, we determine the normalized Laplacian spectrum of central vertex join and central edge join of two regular graphs.
\begin{thm}
	Let $G_i$ be an $r_i$-regular graph with $n_i$ vertices and $m_i$ edges for i=1,2. Then the normalized Laplacian characteristic polynomial of $G_1\dot{\vee} G_2$ is given by
	\begin{align*}
	 &f(\mathcal{L}(G_1\dot{\vee} G_2),x)\\ =&(x-1)^{m_1-n_1}\prod_{i=2}^{n_2}\left((x-\frac{n_1}{r_2+n_1})I_{n_2} -\frac{r_2}{r_2+n_1}\tilde{\mu}_i(G_2)\right)\\
	 &\times\bigg[\left(x-\frac{n_1}{r_2+n_1}\right)\left((x-1)^2-\frac{r_1}{(n_1+n_2-1)}-\frac{(1+r_1)(x-1)}{n_1+n_2-1}\right)\\&\,\,\,\,\,\,\,\,\,\,\,\,\,\,\,\,\,\,\,\,\;\;\;\;\;\;\;\;\;\;\;\;\;\;\;\;\;\;\;\;\;\;\;\;\;\;\;\;\;\;\;\;\;\;\;\;\;\;\;\;\;\;\;\;\;\;\:-\frac{n_1n_2(x-1)-n_1(x-1)(xn_1+xr_2-n_1)}{(n_1+n_2-1)(n_1+r_2)}\bigg]\\&\times\prod_{i=2}^{n_1}\left((x-1)^2-\frac{r_1}{(n_1+n_2-1)}-\frac{(1+r_1)(x-1)}{n_1+n_2-1}+\frac{r_1\tilde{\mu}_i(G_1)}{2(n_1+n_2-1)}+\frac{r_1(x-1)\tilde{\mu}_i(G_1)}{n_1+n_2-1}\right).\\
	\end{align*}
\end{thm}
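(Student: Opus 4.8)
The strategy is to mimic the block-matrix computation from Theorem~3.1, now with an extra block for the vertices of $G_2$. First I would write down $A(G_1\dot\vee G_2)$ as a $3\times 3$ block matrix indexed by (original vertices of $G_1$, subdivision vertices, vertices of $G_2$): the top-left block is $A(\bar G_1)=J_{n_1}-I_{n_1}-A(G_1)$, the $(1,2)$ block is the incidence matrix $I(G_1)$, the $(1,3)$ block is $J_{n_1\times n_2}$, the $(2,2)$ and $(2,3)$ blocks are zero, and the $(3,3)$ block is $A(G_2)$. The degree matrix is block-diagonal with entries $(n_1+n_2-1)I_{n_1}$, $2I_{m_1}$, and $(r_2+n_1)I_{n_2}$. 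Conjugating $A$ by $D^{-1/2}$ and subtracting from $I$ gives $\mathcal{L}(G_1\dot\vee G_2)$ with the corresponding scalar normalizations on each off-diagonal block; then $f(\mathcal{L},x)=\det\!\big(xI-\mathcal{L}\big)$.

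Next I would peel off the easy blocks. The middle (subdivision) block contributes a factor $(x-1)^{m_1}$; using Lemma~2.1 (Schur complement on the $(2,2)$ block, which is a scalar multiple of the identity, hence commutes with everything) I would eliminate the $m_1$ subdivision coordinates, replacing $I(G_1)I(G_1)^T$ by $A(G_1)+r_1 I_{n_1}$ exactly as in Theorem~3.1. This leaves a $2\times 2$ block determinant in the $n_1+n_2$ coordinates of $V(G_1)\cup V(G_2)$, with a $(x-1)^{m_1-n_1}$ prefactor. Then I would Schur-complement out the $G_2$-block. Here the key point is that $A(G_2)$ is $r_2$-regular, so on the orthogonal complement of the all-ones vector the off-diagonal coupling blocks $J_{n_1\times n_2}$ act as zero; thus the eigenvalues $\tilde\mu_i(G_2)$, $i=2,\dots,n_2$, decouple and contribute the factor $\prod_{i=2}^{n_2}\big(x-\frac{n_1}{r_2+n_1}-\frac{r_2}{r_2+n_1}\tilde\mu_i(G_2)\big)$ (rewriting $1-\frac{A(G_2)}{r_2+n_1}$ restricted to that subspace in terms of normalized Laplacian eigenvalues of $G_2$). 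Along the all-ones direction of $G_2$, the coupling survives; I would use the $M$-coronal identity $\chi_{\mathcal{L}(G_2)}(x,\alpha)=\frac{n_2}{x+\alpha-1}$ (Definition~2.2) — more precisely the coronal of the relevant scaled matrix — to capture the resulting rank-one correction, which yields the single bracketed factor involving $\big(x-\frac{n_1}{r_2+n_1}\big)$.

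What remains is an $n_1\times n_1$ determinant of the form $\det\!\big((x-1)^2 I - \text{stuff}(A(G_1))\big)$ plus a rank-one ($J_{n_1}$) correction coming from the $A(\bar G_1)$ term and from the coronal contribution along the $G_2$ all-ones vector. I would diagonalize $A(G_1)$: on the eigenvector for $\lambda_1=r_1$ (the all-ones direction of $G_1$) the $J_{n_1}$ terms and the $G_2$-coupling both act nontrivially, producing the displayed $\big(x-\frac{n_1}{r_2+n_1}\big)[\cdots]$ factor; on the remaining eigenvectors ($i=2,\dots,n_1$) the $J_{n_1}$ term vanishes and, substituting $\lambda_i(G_1)=r_1(1-\tilde\mu_i(G_1))$, one gets the product $\prod_{i=2}^{n_1}\big((x-1)^2-\frac{r_1}{n_1+n_2-1}-\frac{(1+r_1)(x-1)}{n_1+n_2-1}+\frac{r_1\tilde\mu_i(G_1)}{2(n_1+n_2-1)}+\frac{r_1(x-1)\tilde\mu_i(G_1)}{n_1+n_2-1}\big)$. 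Collecting all four pieces gives the claimed formula.

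\textbf{Main obstacle.} The routine linear algebra (two successive Schur complements, simultaneous diagonalization of $A(G_1),A(G_2),J$) is standard; the delicate point is bookkeeping the interaction of the all-ones directions of $G_1$ and $G_2$ with the three different degree-normalizations, so that the rank-one corrections land in exactly the right scalar factor. I expect the bulk of the effort — and the place an error is most likely to creep in — to be verifying that the coronal term $\frac{n_2}{x+\alpha-1}$ with the correct $\alpha$ (here governed by $\frac{n_1}{r_2+n_1}$) combines cleanly with the $A(\bar G_1)$ contribution to produce precisely the bracketed $\big(x-\frac{n_1}{r_2+n_1}\big)[\cdots]$ factor rather than something that only simplifies after further manipulation.
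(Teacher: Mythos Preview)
Your plan is correct and matches the paper's proof in all essentials: the same $3\times 3$ block description of $A(G_1\dot\vee G_2)$ and $D(G_1\dot\vee G_2)$, two successive Schur complements via Lemma~2.1, the coronal identity $\chi_{\mathcal{L}(G_2)}(x,\alpha)=\frac{n_2}{x+\alpha-1}$ to absorb the rank-one coupling along the $G_2$ all-ones direction, $I(G_1)I(G_1)^T=A(G_1)+r_1I_{n_1}$, and finally simultaneous diagonalization of $A(G_1)$ and $J_{n_1}$ to split off the $i=1$ factor from the product over $i=2,\dots,n_1$. The only cosmetic difference is the order of elimination: the paper Schur-complements out the $G_2$ block first and the subdivision block second, whereas you do the subdivision block first; since the $(2,3)$ block is zero these two blocks do not interact and the order is immaterial.
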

\begin{proof}
	Let $I(G_1)$ be the incidence matrix of $G_1.$ Then by a proper labeling of vertices, the adjacency matrix of $G_1\dot{\vee} G_2$ can be written as 
	\begin{align*}
    A(G_1\dot{\vee} G_2)=\begin{pmatrix}
     A(\bar {G_1})&I(G_1)&J_{n_1\times n_2}\\
     I(G_1)^T&O_{m_1 \times m_1}& O_{{m_1}\times {n_2}}\\
     J_{n_2\times n_1}&O_{n_2\times m_1}&A(G_2) 
     \end{pmatrix}. \\
     \end{align*}
     The degree matrix of $G_1\dot{\vee} G_2$ is
    \begin{align*}
     D(G_1\dot{\vee} G_2)=\begin{pmatrix}
     (n_1+n_2-1)I_{n_1}&O_{n_1\times m_1}&O_{n_1\times n_2}\\
     O_{m_1\times n_1}&2I_{m_1}& O_{{m_1}\times {n_2}}\\
     O_{n_2\times n_1}&O_{n_2\times m_1}&(r_2+n_1)I_{n_2} 
     \end{pmatrix}. \\
     \end{align*}
      \begin{align*}
     D^{-\frac{1}{2}}(G_1\dot{\vee} G_2)=\begin{pmatrix}
     \frac{I_{n_1}}{\sqrt{(n_1+n_2-1)}}&O_{n_1\times m_1}&O_{n_1\times n_2}\\
     O_{m_1\times n_1}&\frac{I_{m_1}}{\sqrt{2}}& O_{{m_1}\times {n_2}}\\
     O_{n_2\times n_1}&O_{n_2\times m_1}&\frac{I_{n_2}}{\sqrt{(r_2+n_1)}} 
     \end{pmatrix}. \\
     \end{align*}
     Then the normalized Laplacian matrix of $G_1\dot{\vee} G_2$ is 
     \begin{align*}
    \mathcal{L}(G_1\dot{\vee} G_2)=\begin{pmatrix}
    I_{n_1}-\frac{ A(\bar {G_1})}{n_1+n_2-1}&-\frac{I(G_1)}{\sqrt{2(n_1+n_2-1)}}&-\frac{J_{n_1\times n_2}}{\sqrt{(n_1+n_2-1)(n_1+r_2)}}\\
     -\frac{I(G_1)^T}{\sqrt{2(n_1+n_2-1)}}&I_{m_1 }& O_{{m_1}\times {n_2}}\\
     -\frac{J_{n_2\times n_1}}{\sqrt{(n_1+n_2-1)(n_1+r_2)}}&O_{{n_2}\times {m_1}}&I_{n_2}-\frac{A(G_2) }{(n_1+r_2)}
     \end{pmatrix} \\
     \end{align*}
     \begin{align*}
     \mathcal{L}(G_1\dot{\vee} G_2)=\begin{pmatrix}
     I_{n_1}-\frac{ A(\bar {G_1})}{n_1+n_2-1}&-\frac{I(G_1)}{\sqrt{2(n_1+n_2-1)}}&-\frac{J_{n_1\times n_2}}{\sqrt{(n_1+n_2-1)(n_1+r_2)}}\\
     -\frac{I(G_1)^T}{\sqrt{2(n_1+n_2-1)}}&I_{m_1 }& O_{{m_1}\times {n_2}}\\
     -\frac{J_{n_2\times n_1}}{\sqrt{(n_1+n_2-1)(n_1+r_2)}}&O_{{n_2}\times {m_1}}&\mathcal{L}(G_2)\circ B
     \end{pmatrix}, \\
     \end{align*}
     where $B=\frac{r_2}{r_2+n_1}J_{n_2 }+\frac{n_1}{r_2+n_1}I_{n_2}.$\\Using Definition 2.1, Lemmas 2.1, 2.2, 2.3 and Corollary 2.5, we have the normalized Laplacian characteristic polynomial of $G_1\dot{\vee} G_2$ is \\
     \begin{align*}
     f(\mathcal{L}(G_1\dot{\vee} G_2),x) =&  det\begin{pmatrix}
     (x-1)I_{n_1}+\frac{ A(\bar {G_1})}{n_1+n_2-1}&\frac{I(G_1)}{\sqrt{2(n_1+n_2-1)}}&\frac{J_{n_1\times n_2}}{\sqrt{(n_1+n_2-1)(n_1+r_2)}}\\
     \frac{I(G_1)^T}{\sqrt{2(n_1+n_2-1)}}&(x-1)I_{m_1}& O_{{m_1}\times {n_2}}\\
     \frac{J_{n_2\times n_1}}{\sqrt{(n_1+n_2-1)(n_1+r_2)}}&O_{{n_2}\times {m_1}}&xI_{n_2}-(\mathcal{L}(G_2)\circ B)
     \end{pmatrix} \\ 
     =&\det\bigg(xI_{n_2}-(\mathcal{L}(G_2)\circ B)\bigg)\det S,
     \end{align*}
     \begin{align*}\textnormal{where~S} =&\begin{pmatrix}
     (x-1)I_{n_1}+\frac{ A(\bar {G_1})}{n_1+n_2-1}&\frac{I(G_1)}{\sqrt{2(n_1+n_2-1)}}\\
      \frac{I(G_1)^T}{\sqrt{2(n_1+n_2-1)}}&{(x-1)I_m}_1\\
     \end{pmatrix}-\bigg[\begin{pmatrix}
     \frac{J_{n_1\times n_2}}{\sqrt{(n_1+n_2-1)(n_1+r_2)}}\\
     O_{{m_1}\times {n_2}}
     \end{pmatrix}\left(xI_{n_2}-(\mathcal{L}(G_2)\circ B)\right)^{-1}\\&\,\,\,\,\,\,\,\,\,\,\,\,\,\,\,\,\,\,\,\,\;\;\;\;\;\;\;\;\;\;\;\;\;\;\;\;\;\;\;\;\;\;\;\;\;\;\;\;\;\;\;\;\;\;\;\;\;\;\;\;\;\;\;\;\;\;\:\times\begin{pmatrix}
     \frac{J_{n_2\times n_1}}{\sqrt{(n_1+n_2-1)(n_1+r_2)}}&O_{{n_2}\times {m_1}}\end {pmatrix}\bigg]
      \end{align*}
      \begin{align*}
     =&\begin{pmatrix}
     (x-1)I_{n_1}+\frac{ A(\bar {G_1})}{n_1+n_2-1}-\frac{J_{n_1\times n_2}}{\sqrt{(n_1+n_2-1)(n_1+r_2)}}\left(xI_{n_2}-(\mathcal{L}(G_2)\circ B)\right)^{-1}\frac{J_{n_2\times n_1}}{\sqrt{(n_1+n_2-1)(n_1+r_2)}}&\frac{I(G_1)}{\sqrt{2(n_1+n_2-1)}}\\
     \frac{I(G_1)^T}{\sqrt{2(n_1+n_2-1)}}&{(x-1)I_m}_1\\
     \end{pmatrix}\\
     =&\begin{pmatrix}
     (x-1)I_{n_1}+\frac{ A(\bar {G_1})}{n_1+n_2-1}-\frac{\chi_{\mathcal{L}(G_2)}(x,\frac{r_2}{r_2+n_1})}{(n_1+n_2-1)(n_1+r_2)}J_{n_1}&\frac{I(G_1)}{\sqrt{2(n_1+n_2-1)}}\\
     \frac{I(G_1)^T}{\sqrt{2(n_1+n_2-1)}}&{(x-1)I_m}_1
     \end{pmatrix}.
      \end{align*}Therefore,
      \begin{align*}
     det&(S)=det\begin{pmatrix}
     (x-1)I_{n_1}+\frac{ A(\bar {G_1})}{n_1+n_2-1}-\frac{\chi_{\mathcal{L}(G_2)}(x,\frac{r_2}{r_2+n_1})}{(n_1+n_2-1)(n_1+r_2)}J_{n_1}&\frac{I(G_1)}{\sqrt{2(n_1+n_2-1)}}\\
     \frac{I(G_1)^T}{\sqrt{2(n_1+n_2-1)}}&{(x-1)I_m}_1\\
     \end{pmatrix}\\
     =&(x-1)^{m_1}det\left(
     (x-1)I_{n_1}+\frac{ A(\bar {G_1})}{n_1+n_2-1}-\frac{\chi_{\mathcal{L}(G_2)}(x,\frac{r_2}{r_2+n_1})}{(n_1+n_2-1)(n_1+r_2)}J_{n_1}-\frac{I(G_1)I(G_1)^T}{2(n_1+n_2-1)(x-1)}\right)\\
     =&(x-1)^{m_1}det\bigg(
     (x-1)I_{n_1}+\frac{ A(\bar {G_1})}{n_1+n_2-1}-\frac{\chi_{\mathcal{L}(G_2)}(x,\frac{r_2}{r_2+n_1})}{(n_1+n_2-1)(n_1+r_2)}J_{n_1}\\&    -\frac{r_1}{(n_1+n_2-1)(x-1)}I_{n_1}+\frac{r_1\mathcal{L}(G_1)}{2(n_1+n_2-1)(x-1)}\bigg)\\
      \end{align*}
     \begin{align*}
      =&(x-1)^{m_1}det\bigg(
     (x-1)I_{n_1}+\frac{ J_{n_1}-I_{n_1}-A(G_1) }{n_1+n_2-1}-\frac{\chi_{\mathcal{L}(G_2)}(x,\frac{r_2}{r_2+n_1})}{(n_1+n_2-1)(n_1+r_2)}J_{n_1}\\&    -\frac{r_1}{(n_1+n_2-1)(x-1)}I_{n_1}+\frac{r_1\mathcal{L}(G_1)}{2(n_1+n_2-1)(x-1)}\bigg)\\
      =&(x-1)^{m_1}det\bigg(
     (x-1)I_{n_1}+\frac{ J_{n_1}-I_{n_1}-r_1I_{n_1}+r_1\mathcal{L}(G_1) }{n_1+n_2-1}-\frac{\chi_{\mathcal{L}(G_2)}(x,\frac{r_2}{r_2+n_1})}{(n_1+n_2-1)(n_1+r_2)}J_{n_1}\\&    -\frac{r_1}{(n_1+n_2-1)(x-1)}I_{n_1}+\frac{r_1\mathcal{L}(G_1)}{2(n_1+n_2-1)(x-1)}\bigg)\\
       =&(x-1)^{m_1}\Bigg(1-\left(\frac{\chi_{\mathcal{L}(G_2)}(x,\frac{r_2}{r_2+n_1})}{(n_1+n_2-1)(n_1+r_2)}-\frac{1}{n_1+n_2-1}\right)\\&\chi_{k\mathcal{L}(G_1)}\left(x-1-\frac{r_1}{(n_1+n_2-1)(x-1)}-\frac{1+r_1}{n_1+n_2-1}\right)\Bigg)\\&\times det\left(\left(x-1-\frac{r_1}{(n_1+n_2-1)(x-1)}-\frac{1+r_1}{n_1+n_2-1}\right)I_{n_1}+\frac{r_1\mathcal{L}(G_1)}{2(n_1+n_2-1)(x-1)}+\frac{r_1\mathcal{L}(G_1)}{n_1+n_2-1}\right)
      \bigg),\end{align*} where $k=-\frac{r_1}{2(n_1+n_2-1)(x-1)}-\frac{r_1}{n_1+n_2-1}.$\begin{align*}
    \end{align*}
     It is easy to see that \begin{align*}
     &det\bigg(xI_{n_2}-(\mathcal{L}(G_2)\circ B)\bigg)=det\left((x-\frac{n_1}{r_2+n_1})I_{n_2} -\frac{r_2}{r_2+n_1}\mathcal{L}(G_2)\right).\\
     &\chi_{\mathcal{L}(G_2)}\left(x,\frac{r_2}{r_2+n_1}\right)=\frac{n_2}{x-1+\frac{r_2}{r_2+n_1}}.\\&\chi_{k\mathcal{L}(G_1)}\left(x-1-\frac{r_1}{(n_1+n_2-1)(x-1)}-\frac{1+r_1}{n_1+n_2-1}\right)=\frac{n_1}{x-1-\frac{r_1}{(n_1+n_2-1)(x-1)}-\frac{1+r_1}{n_1+n_2-1}}. \end{align*}Therefore,
     \begin{align*}
      &det(S) =(x-1)^{m_1-n_1}\Bigg(1-\left(\frac{\chi_{\mathcal{L}(G_2)}(x,\frac{r_2}{r_2+n_1})}{(n_1+n_2-1)(n_1+r_2)}-\frac{1}{n_1+n_2-1}\right)\\&\times\left(\frac{n_1}{x-1-\frac{r_1}{(n_1+n_2-1)(x-1)}-\frac{1+r_1}{n_1+n_2-1}}\right)\Bigg)\\&\times det\left(\left((x-1)^2-\frac{r_1}{(n_1+n_2-1)}-\frac{(1+r_1)(x-1)}{n_1+n_2-1}\right)I_{n_1}+\frac{r_1\mathcal{L}(G_1)}{2(n_1+n_2-1)}+\frac{r_1(x-1)\mathcal{L}(G_1)}{n_1+n_2-1}\right)
     \bigg).\\
     &f(\mathcal{L}(G_1\dot{\vee} G_2),x) =\det\bigg(xI_{n_2}-(\mathcal{L}(G_2)\circ B)\bigg)\det S\\
     &\;\;\;\;\;\;\;\;\;\;\;\;\;\;\;\;\;\;\;\;\;\;\;\;\;=det\left((x-\frac{n_1}{r_2+n_1})I_{n_2} -\frac{r_2}{r_2+n_1}\mathcal{L}(G_2)\right)\\
     &\times(x-1)^{m_1-n_1}\bigg[1-\left(\frac{\frac{n_2}{x-1+\frac{r_2}{r_2+n_1}}}{(n_1+n_2-1)(n_1+r_2)}-\frac{1}{n_1+n_2-1}\right)\\&\,\,\,\,\,\,\,\,\,\,\,\,\,\,\,\,\,\,\,\,\;\;\;\;\;\;\;\;\;\;\;\;\;\;\;\;\;\;\;\;\;\;\;\;\;\;\;\;\;\;\;\;\;\;\;\;\;\;\;\;\;\;\;\;\;\;\:\times\left(\frac{n_1}{x-1-\frac{r_1}{(n_1+n_2-1)(x-1)}-\frac{1+r_1}{n_1+n_2-1}}\right)\\&\times det\left((x-1)^2-\frac{r_1}{(n_1+n_2-1)}-\frac{(1+r_1)(x-1)}{n_1+n_2-1}+\frac{r_1\mathcal{L}(G_1)}{2(n_1+n_2-1)}+\frac{r_1(x-1)\mathcal{L}(G_1)}{n_1+n_2-1}\right)
     \bigg]\\
     =&\prod_{i=1}^{n_2}\left((x-\frac{n_1}{r_2+n_1})I_{n_2} -\frac{r_2}{r_2+n_1}\tilde{\mu}_i(G_1)\right)\\
     &\times(x-1)^{m_1-n_1}\bigg[1-\left(\frac{n_1n_2(x-1)-n_1(x-1)(xn_1+xr_2-n_1)}{(x-1+\frac{r_2}{r_2+n_1})(n_1+n_2-1)(n_1+r_2)\left((x-1)^2-\frac{r_1}{(n_1+n_2-1)}-\frac{(1+r_1)(x-1)}{n_1+n_2-1}\right)}\right)\bigg]\\&\prod_{i=1}^{n_1}\left((x-1)^2-\frac{r_1}{(n_1+n_2-1)}-\frac{(1+r_1)(x-1)}{n_1+n_2-1}+\frac{r_1\tilde{\mu}_i(G_1)}{2(n_1+n_2-1)}+\frac{r_1(x-1)\tilde{\mu}_i(G_1)}{n_1+n_2-1}\right)
     \\
     =&(x-1)^{m_1-n_1}\prod_{i=2}^{n_2}\left((x-\frac{n_1}{r_2+n_1})I_{n_2} -\frac{r_2}{r_2+n_1}\tilde{\mu}_i(G_2)\right)\\
     &\times\bigg[\left(x-1+\frac{r_2}{r_2+n_1}\right)\left((x-1)^2-\frac{r_1}{(n_1+n_2-1)}-\frac{(1+r_1)(x-1)}{n_1+n_2-1}\right)\\&\,\,\,\,\,\,\,\,\,\,\,\,\,\,\,\,\,\,\,\,\;\;\;\;\;\;\;\;\;\;\;\;\;\;\;\;\;\;\;\;\;\;\;\;\;\;\;\;\;\;\;\;\;\;\;\;\;\;\;\;\;\;\;\;\;\;\:-\frac{n_1n_2(x-1)-n_1(x-1)(xn_1+xr_2-n_1)}{(n_1+n_2-1)(n_1+r_2)}\bigg]\\&\prod_{i=2}^{n_1}\left((x-1)^2-\frac{r_1}{(n_1+n_2-1)}-\frac{(1+r_1)(x-1)}{n_1+n_2-1}+\frac{r_1\tilde{\mu}_i(G_1)}{2(n_1+n_2-1)}+\frac{r_1(x-1)\tilde{\mu}_i(G_1)}{n_1+n_2-1}\right)
     \\\end{align*}
     \begin{align*}
      =&(x-1)^{m_1-n_1}\prod_{i=2}^{n_2}\left((x-\frac{n_1}{r_2+n_1})I_{n_2} -\frac{r_2}{r_2+n_1}\tilde{\mu}_i(G_2)\right)\\
     &\times\bigg[\left(x-\frac{n_1}{r_2+n_1}\right)\left((x-1)^2-\frac{r_1}{(n_1+n_2-1)}-\frac{(1+r_1)(x-1)}{n_1+n_2-1}\right)\\&\,\,\,\,\,\,\,\,\,\,\,\,\,\,\,\,\,\,\,\,\;\;\;\;\;\;\;\;\;\;\;\;\;\;\;\;\;\;\;\;\;\;\;\;\;\;\;\;\;\;\;\;\;\;\;\;\;\;\;\;\;\;\;\;\;\;\:-\frac{n_1n_2(x-1)-n_1(x-1)(xn_1+xr_2-n_1)}{(n_1+n_2-1)(n_1+r_2)}\bigg]\\&\prod_{i=2}^{n_1}\left((x-1)^2-\frac{r_1}{(n_1+n_2-1)}-\frac{(1+r_1)(x-1)}{n_1+n_2-1}+\frac{r_1\tilde{\mu}_i(G_1)}{2(n_1+n_2-1)}+\frac{r_1(x-1)\tilde{\mu}_i(G_1)}{n_1+n_2-1}\right).
     \\
 \end{align*}
   \end{proof}
The following corollary describes the complete normalized Laplacian spectrum of $G_1\dot{\vee} G_2$ when $G_1$ and $G_2$ are regular graphs.
	\begin{cor}
	Let $G_i$ be an $r_i$-regular graph with $n_i$ vertices and $m_i$ edges for i=1,2. Then the normalized Laplacian spectrum of $G_1\dot{\vee} G_2$ consists of 
	\begin{enumerate}
		
		\item $1$ repeated $m_1-n_1$ times.
		\item Roots of the equation,  $$(x-\frac{n_1}{r_2+n_1}) -\frac{r_2}{r_2+n_1}\tilde{\mu}_i(G_2)=0 ~\textnormal{for}~ i=2,...,n_2.$$
		\item Three roots of the equation,  \begin{align*}&\left(x-\frac{n_1}{r_2+n_1}\right)\left((x-1)^2-\frac{r_1}{n_1+n_2-1}-\frac{(1+r_1)(x-1)}{n_1+n_2-1}\right)\\&-\left(\frac{n_1n_2(x-1)-n_1(x-1)(xn_1+xr_2-n_1)}{(n_1+n_2-1)(n_1+r_2)}\right)=0.\end{align*}
		\item Two roots of the equation, $$(x-1)^2-\frac{r_1}{(n_1+n_2-1)}-\frac{(1+r_1)(x-1)}{n_1+n_2-1}+\frac{r_1\tilde{\mu}_i(G_1)}{2(n_1+n_2-1)}+\frac{r_1(x-1)\tilde{\mu}_i(G_1)}{n_1+n_2-1}=0~\textnormal{for}~ i=2,..,n_1.$$
	\end{enumerate}
\end{cor}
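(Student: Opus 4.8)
The plan is to compute the normalized Laplacian characteristic polynomial of $G_1\dot{\vee} G_2$ directly from its block structure, exactly mirroring the proof of Theorem~4.1. First I would write down the adjacency matrix, the degree matrix (noting the three groups of vertices have degrees $n_1+n_2-1$, $2$, and $r_2+n_1$ respectively), and hence the $3\times 3$ block form of $\mathcal{L}(G_1\dot{\vee} G_2)$, observing that the $(3,3)$ block is $\mathcal{L}(G_2)\circ B$ with $B=\tfrac{r_2}{r_2+n_1}J_{n_2}+\tfrac{n_1}{r_2+n_1}I_{n_2}$. Then I would form $f(\mathcal{L}(G_1\dot{\vee}G_2),x)=\det(xI_{m_1+n_1+n_2}-\mathcal{L})$ and apply the Schur-complement identity (Lemma~2.1) with the invertible $(3,3)$ block $xI_{n_2}-\mathcal{L}(G_2)\circ B$, pulling that determinant out front and reducing to $\det S$ for the $2\times 2$ block matrix $S$ built from the first two vertex groups, corrected by a rank-contribution term through the $J$-blocks.

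Next I would simplify the correction term: since only the $n_1\times n_2$ block $\tfrac{J_{n_1\times n_2}}{\sqrt{(n_1+n_2-1)(n_1+r_2)}}$ couples to $G_2$, the Schur complement contributes $\tfrac{\chi_{\mathcal{L}(G_2)}(x,\,r_2/(r_2+n_1))}{(n_1+n_2-1)(n_1+r_2)}J_{n_1}$ to the top-left block of $S$, and by Definition~2.1 this coronal equals $\tfrac{n_2}{x-1+r_2/(r_2+n_1)}$. I would then apply Lemma~2.1 again to $S$ (using commutativity of $I(G_1)$ with the scalar blocks, or the $X$-invertible form with $X=(x-1)I_{m_1}$), replacing $I(G_1)I(G_1)^T$ by $A(G_1)+r_1I_{n_1}=2r_1I_{n_1}-r_1\mathcal{L}(G_1)$ and $A(\bar G_1)$ by $J_{n_1}-I_{n_1}-A(G_1)=J_{n_1}-(1+r_1)I_{n_1}+r_1\mathcal{L}(G_1)$. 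This collects a $J_{n_1}$-coefficient and an $\mathcal{L}(G_1)$-part; I would factor out the $J_{n_1}$ term using Corollary~2.5 together with the coronal $\chi_{k\mathcal{L}(G_1)}$ (which by the regular-graph formula equals $n_1$ over the same scalar shift, independent of $k$), leaving $\det$ of a matrix polynomial in $\mathcal{L}(G_1)$ alone.

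At that stage everything diagonalizes: substituting the eigenvalues $\tilde\mu_i(G_1)$ of $\mathcal{L}(G_1)$ and $\tilde\mu_i(G_2)$ of $\mathcal{L}(G_2)$ turns the two remaining determinants into products $\prod_{i=1}^{n_1}$ and $\prod_{i=1}^{n_2}$. The final cleanup is to split off the trivial eigenvalue $\tilde\mu_1(G_1)=\tilde\mu_1(G_2)=0$ from each product and absorb the resulting factors into the bracketed cubic — clearing the denominator $(x-1+\tfrac{r_2}{r_2+n_1})$ converts the leading factor into $(x-\tfrac{n_1}{r_2+n_1})$ as in the statement — which reproduces exactly the four-factor expression claimed. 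I expect the only real obstacle to be bookkeeping: keeping the nested scalar shifts $x-1-\tfrac{r_1}{(n_1+n_2-1)(x-1)}-\tfrac{1+r_1}{n_1+n_2-1}$ and the coronal substitutions consistent through two applications of Lemma~2.1, and correctly pulling the $(x-1)$ powers in and out so that the $(x-1)^{m_1-n_1}$ prefactor and the multiplicities in the Corollary come out right; the algebra is routine but error-prone.
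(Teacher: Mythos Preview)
Your proposal is correct and follows essentially the same route as the paper: the paper proves Theorem~4.1 by exactly this sequence of Schur complements, coronal substitutions, and the $J$-factorization via Corollary~2.5, arriving at the four-factor characteristic polynomial, and then states Corollary~4.2 without further proof by simply reading off the eigenvalues from those factors. Your outline matches the paper's computation step for step, including the handling of the $(x-1)^{m_1-n_1}$ prefactor and the separation of the $i=1$ terms into the cubic.
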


\begin{exam}
	Let $G_1=K_{p,p}$ and  $G_2=K_2$ . Then the normalized Laplacian eigenvalues of $G_1$ are 0, 1(multiplicity $2p-2$) and 2. The normalized Laplacian eigenvalues of $G_2$ are 0 and 2. Therefore, the normalized Laplacian eigenvalues of $G_1\dot{\vee} G_2$ are 0, $\frac{2p+2}{2p+1}$,1(multiplicity $p^2-2p$), roots of the equation $(4p^2+4p+1)x^2-(10p^2+11p+3)x+(6p^2+6p+2)=0$, roots of the equation  $(4p+2)x^2-(6p+6)x+2p+4=0$ and roots of the equation $(4p+2)x^2-(8p+6)x+3p+4=0$ (each root with multiplicity $2p-2$).\\
In particular, $G_1=K_{3,3}$ and  $G_2=K_2$ . Then the normalized Laplacian eigenvalues of $G_1$ are 0, 1(multiplicity 4) and 2. The normalized Laplacian eigenvalues of $G_2$ are 0 and 2. Therefore, the normalized Laplacian eigenvalues of $G_1\dot{\vee} G_2$ are 0, $\frac{8}{7}$,1(multiplicity 3), roots of the equation $49x^2-126x+74=0$, roots of the equation $14x^2-24x+10=0$ and roots of the equation $14x^2-30x+13=0$ (each root with multiplicity 4).
\end{exam}

 Next we consider the normalized Laplacian spectrum of $G_1\veebar G_2$.
\begin{thm}
	Let $G_i$ be an $r_i$-regular graph with $n_i$ vertices and $m_i$ edges for i=1,2. Then the normalized Laplacian characteristic polynomial of $G_1\veebar G_2$ is given by
	\begin{align*}
&f(\mathcal{L}(G_1\veebar G_2),x)
=(x-1)^{m_1-n_1-1}\prod_{i=2}^{n_2}\left((x-\frac{m_1}{r_2+m_1})I_{n_2} -\frac{r_2}{r_2+m_1}\tilde{\mu}_i(G_2)\right)\\
&\prod_{i=2}^{n_1}\bigg[(x-1)^2+\frac{ \left(-1-r_1+r_1\tilde{\mu}_i(G_1)\right)(x-1)}{n_1-1}-\frac{(2r_1-r_1\tilde{\mu}_i(G_1))}{(n_1-1)(n_2+2)}\bigg]
\\&\times\bigg[\bigg((x-1)^2+\frac{(n_1-1-r_1)(x-1)}{n_1-1}-\frac{2r_1}{(n_1-1)(n_2+2)}\bigg)\\&\bigg((x-\frac{m_1}{r_2+m_1})(x-1)-\frac{n_2m_1}{(n_2+2)(r_2+m_1)}\bigg)-\frac{n_2r_1^2n_1}{(n_1-1)(n_2+2)^2(r_2+m_1)}\bigg].\\
\end{align*}
	
\end{thm}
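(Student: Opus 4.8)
The plan is to run exactly the argument used for $G_1\dot{\vee}G_2$ in Theorem~4.1, replacing the vertex–vertex attachment by the edge–vertex attachment. First I would record the block structure. Ordering the $n_1$ vertices of $G_1$, then the $m_1$ subdivision vertices of $C(G_1)$, then the $n_2$ vertices of $G_2$, the adjacency matrix of $G_1\veebar G_2$ is
\[
A(G_1\veebar G_2)=\begin{pmatrix}
A(\bar{G_1}) & I(G_1) & O_{n_1\times n_2}\\
I(G_1)^T & O_{m_1\times m_1} & J_{m_1\times n_2}\\
O_{n_2\times n_1} & J_{n_2\times m_1} & A(G_2)
\end{pmatrix},
\]
and the three classes of vertices have degrees $n_1-1$, $n_2+2$ and $r_2+m_1$. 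Forming $D^{-1/2}(G_1\veebar G_2)A(G_1\veebar G_2)D^{-1/2}(G_1\veebar G_2)$ then gives $\mathcal{L}(G_1\veebar G_2)$ in block form; its $(3,3)$ block equals $\mathcal{L}(G_2)\circ B$ with $B=\frac{r_2}{r_2+m_1}J_{n_2}+\frac{m_1}{r_2+m_1}I_{n_2}$ — just as for the central vertex join — and, crucially, its $(1,3)$ block is $O_{n_1\times n_2}$.

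Next I would compute $f(\mathcal{L}(G_1\veebar G_2),x)=\det\big(xI_{m_1+n_1+n_2}-\mathcal{L}(G_1\veebar G_2)\big)$ by a first Schur reduction (Lemma~2.1) along the $G_2$-block $xI_{n_2}-\mathcal{L}(G_2)\circ B$. Because the $(1,3)$ block vanishes, the only change this induces in the remaining $(n_1+m_1)\times(n_1+m_1)$ block is that the subdivision block $(x-1)I_{m_1}$ is replaced by $(x-1)I_{m_1}-cJ_{m_1}$ with $c=\dfrac{\chi_{\mathcal{L}(G_2)}(x,\frac{r_2}{r_2+m_1})}{(n_2+2)(r_2+m_1)}$. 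By Definition~2.1 and the regularity of $G_2$, $\chi_{\mathcal{L}(G_2)}(x,\frac{r_2}{r_2+m_1})=\dfrac{n_2}{\,x-\frac{m_1}{r_2+m_1}\,}$, and $\det\big(xI_{n_2}-\mathcal{L}(G_2)\circ B\big)=\prod_{i=1}^{n_2}\big((x-\tfrac{m_1}{r_2+m_1})-\tfrac{r_2}{r_2+m_1}\tilde{\mu}_i(G_2)\big)$, whose $i=1$ factor is $x-\frac{m_1}{r_2+m_1}$.

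Then I would do a second Schur reduction along the modified subdivision block $X=(x-1)I_{m_1}-cJ_{m_1}$. Lemma~2.6 gives $X^{-1}$ and $\det X=(x-1)^{m_1-1}(x-1-m_1c)$, and the incidence identities $I(G_1)I(G_1)^T=A(G_1)+r_1I_{n_1}$ and $I(G_1)J_{m_1}I(G_1)^T=r_1^2J_{n_1}$ collapse the surviving $n_1\times n_1$ determinant — after using $A(\bar{G_1})=J_{n_1}-I_{n_1}-A(G_1)$ and $A(G_1)=r_1(I_{n_1}-\mathcal{L}(G_1))$ — to $\det\big(\alpha I_{n_1}+\beta\mathcal{L}(G_1)+\gamma J_{n_1}\big)$ with $\alpha,\beta,\gamma$ explicit rational functions of $x$. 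Since $G_1$ is regular, $J_{n_1\times 1}$ is the $\tilde{\mu}_1(G_1)=0$ eigenvector of $\mathcal{L}(G_1)$ and $J_{n_1}$ annihilates its orthogonal complement, so Corollary~2.5 (with $\chi_{k\mathcal{L}(G_1)}(\alpha)=n_1/\alpha$) gives $\det\big(\alpha I_{n_1}+\beta\mathcal{L}(G_1)+\gamma J_{n_1}\big)=(\alpha+\gamma n_1)\prod_{i=2}^{n_1}(\alpha+\beta\tilde{\mu}_i(G_1))$.

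Finally I would multiply the three pieces, $f=\det\big(xI_{n_2}-\mathcal{L}(G_2)\circ B\big)\cdot\det X\cdot\det\big(\alpha I_{n_1}+\beta\mathcal{L}(G_1)+\gamma J_{n_1}\big)$, and simplify. Multiplying $\alpha+\beta\tilde{\mu}_i(G_1)$ through by $(x-1)$ turns each such factor into the quadratic listed in the $\prod_{i=2}^{n_1}$ of the statement, converting $(x-1)^{m_1-1}\prod_{i=2}^{n_1}(\alpha+\beta\tilde{\mu}_i(G_1))$ into $(x-1)^{m_1-n_1}\prod_{i=2}^{n_1}[\,\cdots\,]$; the leftover $i=1$ factor $x-\frac{m_1}{r_2+m_1}$ of $\det\big(xI_{n_2}-\mathcal{L}(G_2)\circ B\big)$ cancels the denominator hidden inside $x-1-m_1c$, leaving the factor $(x-\frac{m_1}{r_2+m_1})(x-1)-\frac{n_2m_1}{(n_2+2)(r_2+m_1)}$; and clearing a last $(x-1)$ from $\alpha+\gamma n_1$ — which drops the exponent to $m_1-n_1-1$ — combines with that factor, using $c\,(x-\frac{m_1}{r_2+m_1})=\frac{n_2}{(n_2+2)(r_2+m_1)}$, to reproduce exactly the final bracket of the theorem. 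The one genuinely delicate step is this closing bookkeeping: one must carry the rational coefficients $\alpha,\beta,\gamma,c$ and check that all spurious $(x-1)$ and $(x-\frac{m_1}{r_2+m_1})$ factors cancel, so that the product is the claimed polynomial; everything else is a routine transcription of the proof of Theorem~4.1.
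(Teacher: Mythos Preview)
Your proposal is correct and follows essentially the same route as the paper's proof: the same block ordering, a first Schur complement on the $G_2$-block $xI_{n_2}-\mathcal{L}(G_2)\circ C$ (so that only the $(2,2)$ block picks up a $-cJ_{m_1}$), then a second Schur complement on $(x-1)I_{m_1}-cJ_{m_1}$ via Lemma~2.6, followed by the identities $I(G_1)I(G_1)^T=A(G_1)+r_1I_{n_1}$, $I(G_1)J_{m_1}I(G_1)^T=r_1^2J_{n_1}$ and Corollary~2.5 to split off the $\tilde{\mu}_1(G_1)=0$ factor. The closing cancellations you flag as ``delicate'' are exactly the ones the paper carries out line by line.
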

\begin{proof}
	Let $I(G_1)$ be the incidence matrix of $G_1.$ Then by a proper labeling of vertices, adjacency matrix of $G_1\veebar G_2$ can be written as 
	\begin{align*}
	A(G_1\veebar G_2)=\begin{pmatrix}
	A(\bar {G_1})&I(G_1)&O_{{n_1}\times {n_2}}\\
	I(G_1)^T&O_{m_1 \times m_1}& J_{m_1\times n_2}\\
	O_{n_2\times n_1}&J_{n_2\times m_1}&A(G_2) 
	\end{pmatrix}. \\
	\end{align*}
	The degree matrix of $G_1\veebar G_2$ is
	\begin{align*}
	D(G_1\veebar G_2)=\begin{pmatrix}
	(n_1-1)I_{n_1}&O_{n_1\times m_1}&O_{n_1\times n_2}\\
	O_{m_1\times n_1}&(n_2+2)I_{m_1}& O_{{m_1}\times {n_2}}\\
	O_{n_2\times n_1}&O_{n_2\times m_1}&(r_2+m_1)I_{n_2} 
	\end{pmatrix}. \\
	\end{align*}
	\begin{align*}
	D^{-\frac{1}{2}}(G_1\veebar G_2)=\begin{pmatrix}
	\frac{I_{n_1}}{\sqrt{(n_1-1)}}&O_{n_1\times m_1}&O_{n_1\times n_2}\\
	O_{m_1\times n_1}&\frac{I_{m_1}}{\sqrt{n_2+2}}& O_{{m_1}\times {n_2}}\\
	O_{n_2\times n_1}&O_{n_2\times m_1}&\frac{I_{n_2}}{\sqrt{(r_2+m_1)}} 
	\end{pmatrix}. \\
	\end{align*}
	 Then the normalized Laplacian matrix of $G_1\veebar G_2$ is 
	\begin{align*}
	\mathcal{L}(G_1\veebar G_2)=\begin{pmatrix}
	I_{n_1}-\frac{ A(\bar {G_1})}{n_1-1}&-\frac{I(G_1)}{\sqrt{(n_1-1)(n_2+2)}}&O_{{n_1}\times {n_2}}\\
	-\frac{I(G_1)^T}{\sqrt{(n_1-1)(n_2+2)}}&I_{m_1 }& -\frac{J_{m_1\times n_2}}{\sqrt{(n_2+2)(r_2+m_1)}}\\
    O_{{n_2}\times {m_1}}&-\frac{J_{n_2\times m_1}}{\sqrt{(n_2+2)(r_2+m_1)}}&I_{n_2}-\frac{A(G_2) }{(m_1+r_2)}
	\end{pmatrix} \\
	\end{align*}
	\begin{align*}
	\mathcal{L}(G_1\veebar G_2)=\begin{pmatrix}
	I_{n_1}-\frac{ A(\bar {G_1})}{n_1-1}&-\frac{I(G_1)}{\sqrt{(n_1-1)(n_2+2)}}&O_{{n_1}\times {n_2}}\\
	-\frac{I(G_1)^T}{\sqrt{(n_1-1)(n_2+2)}}&I_{m_1 }& -\frac{J_{m_1\times n_2}}{\sqrt{(n_2+2)(r_2+m_1)}}\\
	O_{{n_2}\times {m_1}}&-\frac{J_{n_2\times m_1}}{\sqrt{(n_2+2)(r_2+m_1)}}&\mathcal{L}(G_2)\circ C
	\end{pmatrix} \\
	\end{align*}
	where $C=\frac{r_2}{r_2+m_1}J_{n_2 }+\frac{m_1}{r_2+m_1}I_{n_2}$.\\ Using Definition 2.1, Lemmas 2.1, 2.2, 2.3, 2.6 and Corollary 2.5, we have the normalized Laplacian characteristic polynomial of $G_1\veebar G_2$ is\\
	\begin{align*}
	f(\mathcal{L}(G_1\veebar G_2),x) =&  det\begin{pmatrix}
	(x-1)I_{n_1}+\frac{ A(\bar {G_1})}{n_1-1}&\frac{I(G_1)}{\sqrt{(n_1-1)(n_2+2)}}&O_{{n_1}\times {n_2}}\\
	\frac{I(G_1)^T}{\sqrt{(n_1-1)(n_2+2)}}&(x-1)I_{m_1 }& \frac{J_{m_1\times n_2}}{\sqrt{(n_2+2)(r_2+m_1)}}\\
	O_{{n_2}\times {m_1}}&\frac{J_{n_2\times m_1}}{\sqrt{(n_2+2)(r_2+m_1)}}&xI_{n_2}-(\mathcal{L}(G_2)\circ C)
	\end{pmatrix} \\ 
	=&\det\bigg(xI_{n_2}-(\mathcal{L}(G_2)\circ C)\bigg)\det S,\\
	\textnormal{where~S} =&\begin{pmatrix}
	(x-1)I_{n_1}+\frac{ A(\bar {G_1})}{n_1-1}&\frac{I(G_1)}{\sqrt{(n_1-1)(n_2+2)}}\\
	\frac{I(G_1)^T}{\sqrt{(n_1-1)(n_2+2)}}&{(x-1)I_m}_1\\
	\end{pmatrix}-\bigg[\begin{pmatrix}
	O_{{n_1}\times {n_2}}\\
	\frac{J_{m_1\times n_2}}{\sqrt{(n_2+2)(r_2+m_1)}}
	\end{pmatrix}\\&\,\,\,\,\,\,\,\,\,\,\,\,\,\,\,\,\,\,\,\,\;\;\;\;\;\;\;\;\;\;\;\;\;\;\;\;\;\;\;\;\;\;\;\;\;\;\:\times\left(xI_{n_2}-(\mathcal{L}(G_2)\circ C)\right)^{-1}\begin{pmatrix}
	O_{{n_2}\times {m_1}}&\frac{J_{n_2\times m_1}}{\sqrt{(n_2+2)(r_2+m_1)}}\end {pmatrix}\bigg]
	\end{align*}
	\begin{align*}
	S=&\begin{pmatrix}
	(x-1)I_{n_1}+\frac{ A(\bar {G_1})}{n_1-1}&\frac{I(G_1)}{\sqrt{(n_1-1)(n_2+2)}}\\
	\frac{I(G_1)^T}{\sqrt{(n_1-1)(n_2+2)}}&{(x-1)I_m}_1-\frac{J_{m_1\times n_2}}{\sqrt{(n_2+2)(r_2+m_1)}}\left(xI_{n_2}-(\mathcal{L}(G_2)\circ C)\right)^{-1}\frac{J_{n_2\times m_1}}{\sqrt{(n_2+2)(r_2+m_1)}}\\
	\end{pmatrix}\\
	=&\begin{pmatrix}
	(x-1)I_{n_1}+\frac{ A(\bar {G_1})}{n_1-1}&\frac{I(G_1)}{\sqrt{(n_1-1)(n_2+2)}}\\
	\frac{I(G_1)^T}{\sqrt{(n_1-1)(n_2+2)}}&{(x-1)I_m}_1-\frac{\chi_{\mathcal{L}(G_2)}(x,\frac{r_2}{r_2+m_1})}{(n_2+2)(r_2+m_1)}J_{m_1}\\
	\end{pmatrix}.\\\textnormal{Therefore,}\\
	det(S)=&det\begin{pmatrix}
	(x-1)I_{n_1}+\frac{ A(\bar {G_1})}{n_1-1}&\frac{I(G_1)}{\sqrt{(n_1-1)(n_2+2)}}\\
	\frac{I(G_1)^T}{\sqrt{(n_1-1)(n_2+2)}}&{(x-1)I_m}_1-\frac{\chi_{\mathcal{L}(G_2)}(x,\frac{r_2}{r_2+m_1})}{(n_2+2)(r_2+m_1)}J_{m_1}\\
	\end{pmatrix}
	\end{align*}
\begin{align*}
=&det\left((x-1){I_m}_1-\frac{\chi_{\mathcal{L}(G_2)}(x,\frac{r_2}{r_2+m_1})}{(n_2+2)(r_2+m_1)}J_{m_1}\right)det\bigg[
(x-1)I_{n_1}+\frac{ A(\bar {G_1})}{n_1-1}\\&-\frac{I(G_1)}{\sqrt{(n_1-1)(n_2+2)}}\left((x-1){I_m}_1-\frac{\chi_{\mathcal{L}(G_2)}(x,\frac{r_2}{r_2+m_1})}{(n_2+2)(r_2+m_1)}J_{m_1}\right)^{-1}\frac{I(G_1)^T}{\sqrt{(n_1-1)(n_2+2)}}\bigg]\\
=&(x-1)^{m_1}\left(1-\frac{\chi_{\mathcal{L}(G_2)}(x,\frac{r_2}{r_2+m_1})}{(n_2+2)(r_2+m_1)}.\frac{m_1}{x-1}\right)det\bigg[
(x-1)I_{n_1}+\frac{ J_{n_1}-I_{n_1}-A(G_1)}{n_1-1}\\&-\frac{I(G_1)}{\sqrt{(n_1-1)(n_2+2)}}\left(\frac{1}{(x-1)}{I_m}_1+\frac{\beta}{(x-1)(x-1-m_1\beta)}J_{m_1}\right)\frac{I(G_1)^T}{\sqrt{(n_1-1)(n_2+2)}}\bigg],
\end{align*} where $\beta= \frac{\chi_{\mathcal{L}(G_2)}(x,\frac{r_2}{r_2+m_1})}{(n_2+2)(r_2+m_1)}$
\begin{align*}
det S=&(x-1)^{m_1}\left(1-\beta\frac{m_1}{x-1}\right)det\bigg[
(x-1)I_{n_1}+\frac{ J_{n_1}-I_{n_1}-A(G_1)}{n_1-1}-\frac{I(G_1)I(G_1)^T}{(x-1)(n_1-1)(n_2+2)}\\&-\frac{I(G_1)}{\sqrt{(n_1-1)(n_2+2)}}\left(\frac{\beta}{(x-1)(x-1-m_1\beta)}J_{m_1}\right)\frac{I(G_1)^T}{\sqrt{(n_1-1)(n_2+2)}}\bigg]\\
=&(x-1)^{m_1}\left(1-\beta\frac{m_1}{x-1}\right)det\bigg[
(x-1)I_{n_1}+\frac{ J_{n_1}-I_{n_1}-A(G_1)}{n_1-1}\\&-\frac{I(G_1)I(G_1)^T}{(x-1)(n_1-1)(n_2+2)}-\frac{\beta}{(x-1)(x-1-m_1\beta)}\frac{I(G_1)J_{m_1}I(G_1)^T}{(n_1-1)(n_2+2)}\bigg]\\
=&(x-1)^{m_1}\left(1-\beta\frac{m_1}{x-1}\right)det\bigg[
(x-1)I_{n_1}+\frac{ J_{n_1}-I_{n_1}-A(G_1)}{n_1-1}\\&-\frac{I(G_1)I(G_1)^T}{(x-1)(n_1-1)(n_2+2)}-\frac{\beta r_1^2 J_{n_1}}{(x-1)(x-1-m_1\beta)(n_1-1)(n_2+2)}\bigg]\\
=&(x-1)^{m_1}\left(1-\beta\frac{m_1}{x-1}\right)det\bigg[
(x-1)I_{n_1}+\frac{ J_{n_1}-I_{n_1}-A(G_1)}{n_1-1}-\frac{I(G_1)I(G_1)^T}{(x-1)(n_1-1)(n_2+2)}\bigg]\\&\bigg[1-\frac{\beta r_1^2\chi_{-\frac{ J_{n_1}-I_{n_1}-A(G_1)}{n_1-1}+\frac{I(G_1)I(G_1)^T}{(x-1)(n_1-1)(n_2+2)}(x-1)} }{(x-1)(x-1-m_1\beta)(n_1-1)(n_2+2)}\bigg].\\
Thus,
\\det(S) &=(x-1)^{m_1}\left(1-\frac{n_2m_1}{(x-1)(x-\frac{m_1}{r_2+m_1})(n_2+2)(r_2+m_1)}\right)\\&\times det\bigg[
(x-1)I_{n_1}+\frac{ J_{n_1}-I_{n_1}-r_1I_{n_1}+r_1\mathcal{L}(G_1)}{n_1-1}-\frac{(2r_1I_{n_1}-r_1\mathcal{L}(G_1))}{(x-1)(n_1-1)(n_2+2)}\bigg]\\&\times\bigg[1-\frac{\frac{n_2r_1^2n_1}{(x-\frac{m_1}{r_2+m_1})(n_2+2)(r_2+m_1)(x-1+\frac{n_1-1-r_1}{n_1-1}-\frac{2r_1}{(x-1)(n_1-1)(n_2+2)}}  }{(x-1)(x-1-m_1\beta)(n_1-1)(n_2+2)}\bigg].
\end{align*}
\begin{align*}
&f(\mathcal{L}(G_1\underline{\vee} G_2),x)
=\det\bigg(xI_{n_2}-(\mathcal{L}(G_2)\circ C)\bigg)\det S\\
=&det\left((x-\frac{m_1}{r_2+m_1})I_{n_2} -\frac{r_2}{r_2+m_1}\mathcal{L}(G_2)\right)\\
&\times(x-1)^{m_1}\left(1-\frac{n_2m_1}{(x-1)(x-\frac{m_1}{r_2+m_1})(n_2+2)(r_2+m_1)}\right)\\&\times det\bigg[
(x-1)I_{n_1}+\frac{ J_{n_1}-I_{n_1}-r_1I_{n_1}+r_1\mathcal{L}(G_1)}{n_1-1}-\frac{(2r_1I_{n_1}-r_1\mathcal{L}(G_1))}{(x-1)(n_1-1)(n_2+2)}\bigg]\\&\times\bigg[1-\frac{\frac{n_2r_1^2n_1}{(x-\frac{m_1}{r_2+m_1})(n_2+2)(r_2+m_1)\left(x-1+\frac{n_1-1-r_1}{n_1-1}-\frac{2r_1}{(x-1)(n_1-1)(n_2+2)}\right)} }{(x-1)\left(x-1-\frac{m_1n_2}{(x-\frac{m_1}{r_2+m_1})(n_2+2)(r_2+m_1)}\right)(n_1-1)(n_2+2)}\bigg]\\
=&\prod_{i=2}^{n_2}\left((x-\frac{m_1}{r_2+m_1})I_{n_2} -\frac{r_2}{r_2+m_1}\tilde{\mu}_i(G_2)\right)\\
&\times(x-1)^{m_1-n_1}\bigg[(x-\frac{m_1}{r_2+m_1})-\frac{n_2m_1}{(x-1)(n_2+2)(r_2+m_1)}\bigg]\\&\prod_{i=1}^{n_1}\bigg[(x-1)^2+\frac{ \left(P(\lambda_i)-1-r_1+r_1\tilde{\mu}_i(G_1)\right)(x-1)}{n_1-1}-\frac{(2r_1-r_1\tilde{\mu}_i(G_1))}{(n_1-1)(n_2+2)}
\bigg]\\&\times\bigg[1-\frac{\frac{n_2r_1^2n_1}{(x-\frac{m_1}{r_2+m_1})(n_2+2)(r_2+m_1)(x-1+\frac{n_1-1-r_1}{n_1-1}-\frac{2r_1}{(x-1)(n_1-1)(n_2+2)}}  }{(x-1)\left(x-1-\frac{m_1n_2}{(x-\frac{m_1}{r_2+m_1})(n_2+2)(r_2+m_1)}\right)(n_1-1)(n_2+2)}\bigg]\\
=&\prod_{i=2}^{n_2}\left((x-\frac{m_1}{r_2+m_1})I_{n_2} -\frac{r_2}{r_2+m_1}\tilde{\mu}_i(G_2)\right)\\
&\times(x-1)^{m_1-n_1}\bigg[(x-\frac{m_1}{r_2+m_1})-\frac{n_2m_1}{(x-1)(n_2+2)(r_2+m_1)}\bigg]\\&\prod_{i=2}^{n_1}\bigg[(x-1)^2+\frac{ \left(-1-r_1+r_1\tilde{\mu}_i(G_1)\right)(x-1)}{n_1-1}-\frac{(2r_1-r_1\tilde{\mu}_i(G_1))}{(n_1-1)(n_2+2)}
\bigg]\\&\times\bigg[(x-1)^2+\frac{(n_1-1-r_1)(x-1)}{n_1-1}-\frac{2r_1}{(n_1-1)(n_2+2)}\\&-\frac{n_2r_1^2n_1}{(x-\frac{m_1}{r_2+m_1})(n_2+2)(r_2+m_1)\left(x-1-\frac{m_1n_2}{(x-\frac{m_1}{r_2+m_1})(n_2+2)(r_2+m_1)}\right)(n_1-1)(n_2+2)}\bigg]\\
\end{align*}
\begin{align*}
=&(x-1)^{m_1-n_1-1}\prod_{i=2}^{n_2}\left((x-\frac{m_1}{r_2+m_1})I_{n_2} -\frac{r_2}{r_2+m_1}\tilde{\mu}_i(G_2)\right)\\
&\prod_{i=2}^{n_1}\bigg[(x-1)^2+\frac{ \left(-1-r_1+r_1\tilde{\mu}_i(G_1)\right)(x-1)}{n_1-1}-\frac{(2r_1-r_1\tilde{\mu}_i(G_1))}{(n_1-1)(n_2+2)}\bigg]
\\&\times\bigg[\bigg((x-1)^2+\frac{(n_1-1-r_1)(x-1)}{n_1-1}-\frac{2r_1}{(n_1-1)(n_2+2)}\bigg)\\&\bigg((x-\frac{m_1}{r_2+m_1})(x-1)-\frac{n_2m_1}{(n_2+2)(r_2+m_1)}\bigg)-\frac{n_2r_1^2n_1}{(n_1-1)(n_2+2)^2(r_2+m_1)}\bigg].\\
\end{align*}
\end{proof}
The following corollary describes the complete normalized Laplacian spectrum of $G_1\underline{\vee} G_2$ when $G_1$ and $G_2$ are regular graphs.
\begin{cor}
Let $G_i$ be an $r_i$-regular graph with $n_i$ vertices and $m_i$ edges for i=1,2. Then the normalized Laplacian spectrum of $G_1\underline{\vee} G_2$ consists of 
\begin{enumerate}
	\item $1$ repeated $m_1-n_1-1$ times.
	\item Roots of the equation,  $$(x-\frac{m_1}{r_2+m_1}) -\frac{r_2}{r_2+m_1}\tilde{\mu}_i(G_2)=0~\textnormal{for} ~i=2,...,n_2.$$
	\item Four roots of the equation,  \begin{align*}&   \bigg((x-1)^2+\frac{(n_1-1-r_1)(x-1)}{n_1-1}-\frac{2r_1}{(n_1-1)(n_2+2)}\bigg)\\&\bigg((x-\frac{m_1}{r_2+m_1})(x-1)-\frac{n_2m_1}{(n_2+2)(r_2+m_1)}\bigg)-\frac{n_2r_1^2n_1}{(n_1-1)(n_2+2)^2(r_2+m_1)}=0.\end{align*}
	\item Two roots of the equation, $$(x-1)^2+\frac{ \left(-1-r_1+r_1\tilde{\mu}_i(G_1)\right)(x-1)}{n_1-1}-\frac{(2r_1-r_1\tilde{\mu}_i(G_1))}{(n_1-1)(n_2+2)}=0~ \textnormal{for}~ i=2,...,n_1.$$
\end{enumerate}
\end{cor}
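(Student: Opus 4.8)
The plan is to follow the same route as the proof of Theorem~4.1, using that in $G_1\veebar G_2$ the copy of $G_2$ is attached only to the $m_1$ subdivision vertices of $C(G_1)$, so these are the only vertices whose degree differs from its value in $C(G_1)$. Ordering the $n_1+m_1+n_2$ vertices as (vertices of $G_1$, subdivision vertices, vertices of $G_2$), the adjacency matrix splits into a $3\times3$ block form with blocks $A(\bar G_1),I(G_1),I(G_1)^T,O_{m_1\times m_1},J_{m_1\times n_2},J_{n_2\times m_1},A(G_2)$ in the appropriate slots, while $D(G_1\veebar G_2)=\mathrm{diag}\big((n_1-1)I_{n_1},(n_2+2)I_{m_1},(r_2+m_1)I_{n_2}\big)$. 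Forming $\mathcal L=I-D^{-1/2}AD^{-1/2}$, the key observation is that its lower-right block equals $I_{n_2}-\frac{A(G_2)}{r_2+m_1}=\mathcal L(G_2)\circ C$ with $C=\frac{r_2}{r_2+m_1}J_{n_2}+\frac{m_1}{r_2+m_1}I_{n_2}$, which makes the coronal machinery of Definition~2.1 applicable.

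Writing $f(\mathcal L(G_1\veebar G_2),x)=\det(xI-\mathcal L)$, I would first apply Lemma~2.1 with the invertible block $xI_{n_2}-\mathcal L(G_2)\circ C$. This extracts $\det\big(xI_{n_2}-\mathcal L(G_2)\circ C\big)=\prod_{i=1}^{n_2}\big((x-\tfrac{m_1}{r_2+m_1})-\tfrac{r_2}{r_2+m_1}\tilde\mu_i(G_2)\big)$ and leaves the $(n_1+m_1)\times(n_1+m_1)$ Schur complement $S$, which coincides with the top-left submatrix of $xI-\mathcal L$ except that the subdivision diagonal block $(x-1)I_{m_1}$ is replaced by $(x-1)I_{m_1}-\beta J_{m_1}$; here $\beta=\frac{\chi_{\mathcal L(G_2)}(x,\,r_2/(r_2+m_1))}{(n_2+2)(r_2+m_1)}$, using that the blocks linking the $G_2$-part to the rest are scalar multiples of all-ones matrices touching only the subdivision block, and $\chi_{\mathcal L(G_2)}(x,\alpha)=\frac{n_2}{x+\alpha-1}$ by Definition~2.1.

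Next I would apply Lemma~2.1 again inside $S$, now with the invertible block $(x-1)I_{m_1}-\beta J_{m_1}$, whose determinant is $(x-1)^{m_1}\big(1-\tfrac{m_1\beta}{x-1}\big)$ by Corollary~2.5 and whose inverse is $\tfrac1{x-1}I_{m_1}+\tfrac{\beta}{(x-1)(x-1-m_1\beta)}J_{m_1}$ by Lemma~2.6. With $I(G_1)I(G_1)^T=A(G_1)+r_1I_{n_1}$ and $I(G_1)J_{m_1}I(G_1)^T=r_1^2J_{n_1}$ (the latter since $I(G_1)J_{m_1\times1}=r_1J_{n_1\times1}$), $\det S$ reduces to $(x-1)^{m_1}\big(1-\tfrac{m_1\beta}{x-1}\big)$ times an $n_1\times n_1$ determinant $\det\big(\varphi(A(G_1))+\gamma J_{n_1}\big)$ with $\varphi$ affine and $\gamma$ a scalar formed from the $J_{n_1}$ in $A(\bar G_1)=J_{n_1}-I_{n_1}-A(G_1)$ and the $J_{n_1}$ of the Schur correction. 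Substituting $A(G_1)=r_1I_{n_1}-r_1\mathcal L(G_1)$, I would split off the Schur-correction $J_{n_1}$ by Corollary~2.5, absorb the $A(\bar G_1)$-term through $P(A(G_1))=J_{n_1}$, $P(r_1)=n_1$, $P(\lambda_i)=0$ (Lemma~2.2), and then simultaneously diagonalize $A(G_1),\mathcal L(G_1),J_{n_1}$ (possible since $G_1$ is regular): the determinant becomes a product over the eigenspaces of $\mathcal L(G_1)$, in which the all-ones eigenspace (eigenvalue $0$) contributes the factor $(x-1)^2+\frac{(n_1-1-r_1)(x-1)}{n_1-1}-\frac{2r_1}{(n_1-1)(n_2+2)}$ together with the Corollary~2.5 correction (a coronal term carrying $\beta$ and $n_1$), while $\tilde\mu_i(G_1)$, $i=2,\dots,n_1$, contribute the clean factors $(x-1)^2+\frac{(-1-r_1+r_1\tilde\mu_i(G_1))(x-1)}{n_1-1}-\frac{2r_1-r_1\tilde\mu_i(G_1)}{(n_1-1)(n_2+2)}$.

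Finally I would assemble: substitute $\chi_{\mathcal L(G_2)}(x,r_2/(r_2+m_1))=\tfrac{n_2}{x-m_1/(r_2+m_1)}$, pull the $i=1$ term $x-\tfrac{m_1}{r_2+m_1}$ out of the $G_2$-determinant, and clear denominators so that this factor, the all-ones $G_1$-factor with its correction, and $1-\tfrac{m_1\beta}{x-1}$ coalesce into the single quartic bracket
\[\Big((x-1)^2+\tfrac{(n_1-1-r_1)(x-1)}{n_1-1}-\tfrac{2r_1}{(n_1-1)(n_2+2)}\Big)\Big((x-\tfrac{m_1}{r_2+m_1})(x-1)-\tfrac{n_2m_1}{(n_2+2)(r_2+m_1)}\Big)-\tfrac{n_2r_1^2n_1}{(n_1-1)(n_2+2)^2(r_2+m_1)},\]
while the surplus powers of $(x-1)$ collapse to $(x-1)^{m_1-n_1-1}$. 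These manipulations are valid for $x$ outside the finitely many poles introduced along the way ($x=1$, $x=\tfrac{m_1}{r_2+m_1}$, $x-1-m_1\beta=0$, and the roots of the all-ones factor), and since both sides are polynomials the identity extends to all $x$. I expect the one genuinely delicate point to be this closing bookkeeping — keeping exact track of the powers of $(x-1)$ and of $x-\tfrac{m_1}{r_2+m_1}$ as the three rational factors are cleared and recombined — the Schur reductions and coronal evaluations being routine given Lemmas~2.1, 2.2, 2.6 and Corollary~2.5.
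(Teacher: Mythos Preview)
Your proposal is correct and follows essentially the same route as the paper's proof of Theorem~4.3 (from which the corollary is read off): the same $3\times3$ block decomposition, the same two successive Schur reductions (first on the $G_2$-block $xI_{n_2}-\mathcal L(G_2)\circ C$, then on the subdivision block $(x-1)I_{m_1}-\beta J_{m_1}$ via Lemma~2.6 and Corollary~2.5), the same use of $I(G_1)J_{m_1}I(G_1)^T=r_1^2J_{n_1}$ and Lemma~2.2, and the same final recombination into the quartic bracket with residual power $(x-1)^{m_1-n_1-1}$. Your caveat about the closing bookkeeping being the delicate point is exactly where the paper also spends most of its effort.
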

\begin{exam}
	Let $G_1=K_{p,p}$ and  $G_2=K_2$ . Then the normalized Laplacian eigenvalues of $G_1$ are 0, 1(multiplicity $2p-2$) and 2. The normalized Laplacian eigenvalues of $G_2$ are 0 and 2. 	Therefore, the normalized Laplacian eigenvalues of $G_1\underline{\vee} G_2$ are 0, $\frac{p^2+2}{p^2+1}$,1(multiplicity $p^2-2p-1$), roots of the equation $(32p^3-16p^2+32p-16)x^3+(-112p^3+48p^2-80p+32)x^2+(120p^3-40p^2+56p-16)x+8p^2-40p^3-8p=0$, roots of the equation $(2p-1)x^2-(3p-1)x+p=0$ and roots of the equation $(8p-4)x^2-(16p-4)x+7p=0$ (each root with multiplicity $2p-2$ ).\\
	In particular, $G_1=K_{3,3}$ and  $G_2=K_2$ . Then the normalized Laplacian eigenvalues of $G_1$ are 0, 1(multiplicity 4) and 2. The normalized Laplacian eigenvalues of $G_2$ are 0 and 2. Therefore, the normalized Laplacian eigenvalues of $G_1\underline{\vee} G_2$ are 0, $\frac{11}{10}$,1(multiplicity 2), roots of the equation $800x^3-2800x^2+3032x-1032=0$, roots of the equation $5x^2-8x+3=0$ and roots of the equation $20x^2-44x+21=0$ (each root with multiplicity 4). 
\end{exam}	 
\section{Applications}
In this section, we construct a family of non-regular $\mathcal{L}$-cospectral graphs using central joins.  Also, we compute the degree Kirchhoff index and the Kemeny's constant of central graph, central vertex join and central edge join of two regular graphs. \\ In the following theorem we construct non-regular $\mathcal{L}$ cospectral graphs using central vertex join and central edge join of two regular graphs.
\begin{thm}
	Let $G_1$ and $G_2$ (not necessarily distinct) be $\mathcal{L}$-cospectral regular graphs, $H_1$ and $H_2$ (not necessarily distinct) are another $\mathcal{L}$-cospectral regular graphs. Then $\mathcal{L}(G_1\dot{\vee} H_1)$ and $\mathcal{L}(G_2\dot{\vee} H_2)$ (respectively, $\mathcal{L}(G_1\veebar H_1)$ and $\mathcal{L}(G_2\veebar H_2)$) are $\mathcal{L}$-cospectral non-regular graphs.
\end{thm}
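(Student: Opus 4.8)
The strategy is to read off, from the closed forms for $f(\mathcal{L}(G_1\dot{\vee} G_2),x)$ and $f(\mathcal{L}(G_1\veebar G_2),x)$ established in Theorems~4.1 and~4.2, precisely which invariants of the two factor graphs the normalized Laplacian characteristic polynomial depends on, and then to check that these invariants are shared by $\mathcal{L}$-cospectral regular graphs. The first thing I would record is the elementary observation that two $\mathcal{L}$-cospectral regular graphs have the same order, the same regularity, and hence the same number of edges: the order is the number of normalized Laplacian eigenvalues, so it is a spectral invariant; and for an $r$-regular graph $\mathcal{L}=I_n-\tfrac1rA$, so $\operatorname{tr}\mathcal{L}^2=n+\tfrac1{r^2}\operatorname{tr}A^2=n+\tfrac nr=\sum_i\tilde\mu_i^2$, which recovers $r=n/\!\left(\sum_i\tilde\mu_i^2-n\right)$ and then $m=nr/2$. (Equivalently, one can use the relation $\tilde\mu_i(G)=1-\lambda_i(G)/r$ for regular $G$ recalled in the introduction.) In particular, under the hypotheses of the theorem $G_1$ and $G_2$ share a common triple $(n_1,m_1,r_1)$ and $H_1$ and $H_2$ share a common pair $(n_2,r_2)$, while of course each pair also has a common $\mathcal{L}$-spectrum.

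Next I would inspect the right-hand sides of Theorems~4.1 and~4.2. In both formulas every factor is built only from $n_1,m_1,r_1$, from $n_2,r_2$, and from the multisets $\{\tilde\mu_i(G_1):2\le i\le n_1\}$ and $\{\tilde\mu_i(H_1):2\le i\le n_2\}$ --- each eigenvalue multiset entering only through a symmetric product --- and, notably, $m_2$ does not occur at all. Combining this with the previous paragraph: passing from $G_1\dot{\vee} H_1$ to $G_2\dot{\vee} H_2$ changes none of $n_1,m_1,r_1,n_2,r_2$ and leaves the two eigenvalue multisets unchanged, so $f(\mathcal{L}(G_1\dot{\vee} H_1),x)=f(\mathcal{L}(G_2\dot{\vee} H_2),x)$; the identical argument applied to Theorem~4.2 gives $f(\mathcal{L}(G_1\veebar H_1),x)=f(\mathcal{L}(G_2\veebar H_2),x)$. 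Hence the two central vertex joins are $\mathcal{L}$-cospectral, and likewise the two central edge joins.

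It remains to see that the graphs produced are non-regular, and this is immediate from the degree matrices written out in the proofs of Theorems~4.1 and~4.2: in $G_1\dot{\vee} H_1$ the vertices coming from $V(G_1)$ have degree $n_1+n_2-1$, the $m_1$ subdivision vertices have degree $2$, and the vertices coming from $H_1$ have degree $r_2+n_1$; similarly $G_1\veebar H_1$ carries vertices of degrees $n_1-1$, $n_2+2$ and $r_2+m_1$. These degree lists are not constant, so the joins are not regular. The only part of the argument that is not routine bookkeeping is the first paragraph --- pinning down the valency, and hence the size, of a regular graph from its $\mathcal{L}$-spectrum; beyond that everything is substitution into the Section~4 formulas. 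I would also note there the mild standing assumption needed to make the degree lists genuinely non-constant (for instance that $G_1$ has at least one edge, so that the degree-$2$ vertices are actually present, and that the three listed degrees are not accidentally forced to coincide, which rules out only a few trivial cases such as $G_1=K_2,\ H_1=K_1$).
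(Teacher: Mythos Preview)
Your proof is correct and follows essentially the same approach as the paper's: note that $\mathcal{L}$-cospectral regular graphs share the parameters $(n,r,m)$, then read off from Theorems~4.1 and~4.2 that the characteristic polynomial depends only on those parameters and the two $\mathcal{L}$-spectra. Your version is more thorough---you actually justify how $r$ is recovered from the $\mathcal{L}$-spectrum, verify non-regularity from the degree matrices, and flag the degenerate edge cases---whereas the paper's proof is a two-sentence sketch that asserts the same-parameters fact without argument and does not address non-regularity at all.
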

\begin{proof}
	If $G_1$ and  $G_2$ are two $\mathcal{L}$-cospectral regular graphs, then they have the same regularity with same number of vertices and same number of edges. It is also true for $H_1$ and $H_2$. By applying Theorem 4.1 on the concerned graphs and comparing their normalized Laplacian characteristic polynomial we get the required result. 
\end{proof}
\begin{exam}
	Consider two non isomorphic regular cospectral graphs $G$ and $H$ as in \cite{van2003graphs}. Graphs $C(G)$ and $C(H)$ as shown in Figure:1. Also, $G\dot{\vee} K_2$ and $H\dot{\vee} K_2$ are non isomorphic.
	By Theorem 5.1, we obtain  $G\dot{\vee} K_2$ and $H\dot{\vee} K_2$ are $\mathcal{L} $-cospectral non-regular graphs.	
\end{exam}
	\begin{figure}[H]
\begin{figure}[H]
	\begin{minipage}[b]{0.5\linewidth}
		\centering
		\includegraphics[width=6.0 cm]{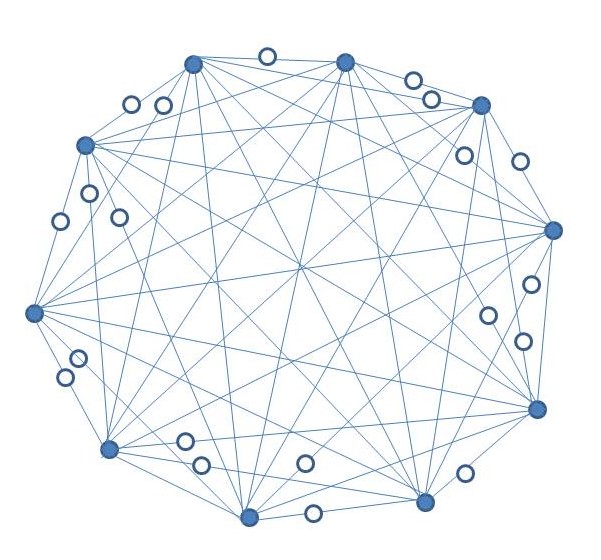}
		\caption{  $C(G)$}
		\label{pict14}
	\end{minipage}
	\hspace{0.5cm}
	\begin{minipage}[b]{0.5\linewidth}
		\centering
		\includegraphics[width=6.0 cm]{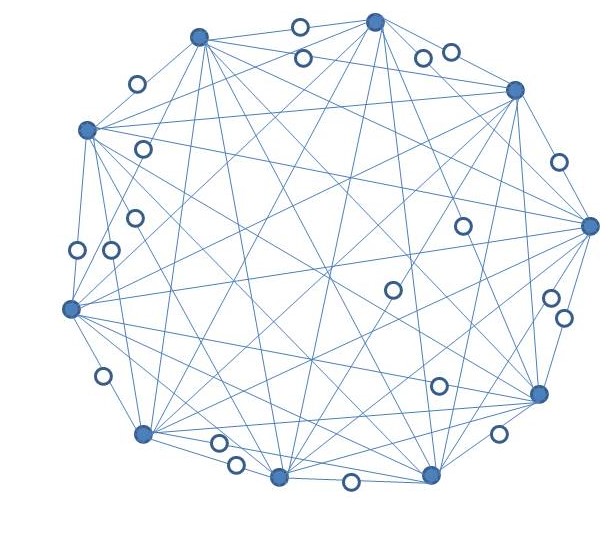}
		\caption{  $C(H)$}
		\label{pict15}
	\end{minipage}
	\caption{Figure:1 Non-regular non isomorphic $\mathcal{L} $ cospectral graphs}
\end{figure}
\end{figure}	
\begin{figure}[H]
	\begin{minipage}[b]{0.5\linewidth}
		\centering
		\includegraphics[width=8.0 cm]{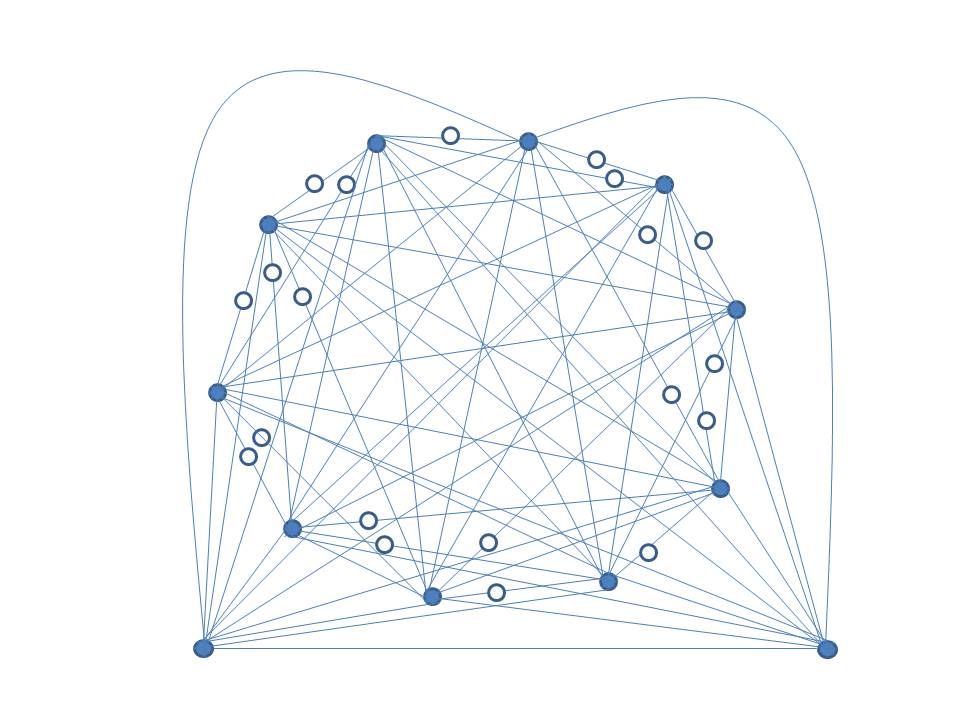}
		\caption{  $G\dot{\vee} K_2$}
		\label{pict14}
	\end{minipage}
	\hspace{0.5cm}
	\begin{minipage}[b]{0.5\linewidth}
		\centering
		\includegraphics[width=8.0 cm]{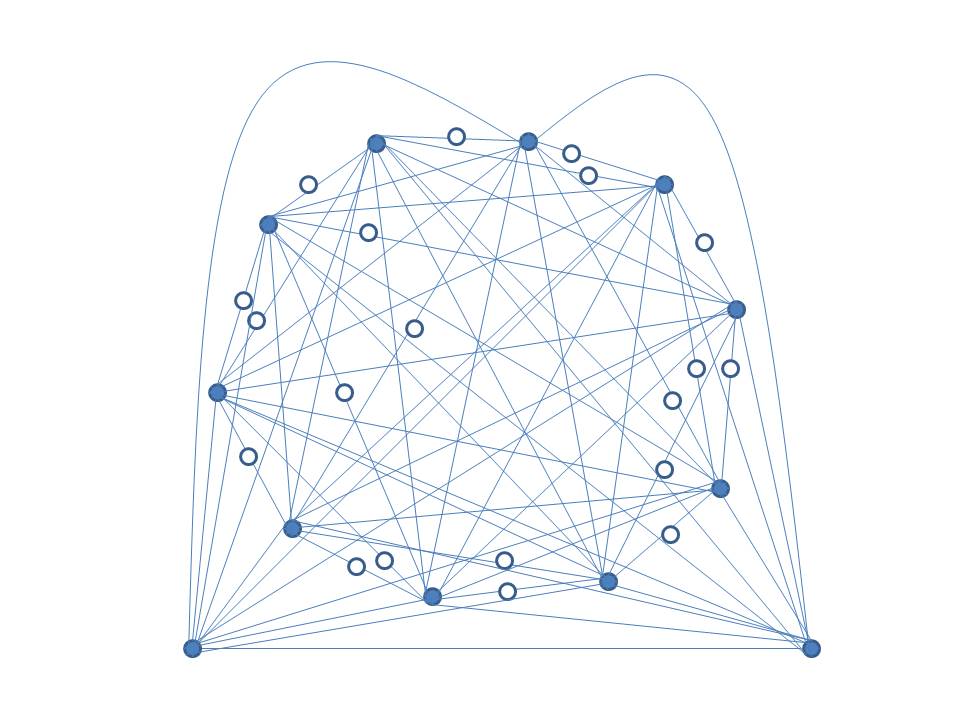}
		\caption{  $H \dot{\vee}  K_2$}
		\label{pict15}
	\end{minipage}
	\caption{Figure:2 Non-regular non isomorphic $\mathcal{L} $ cospectral graphs}
\end{figure}
By Corollary 3.2, we can readily obtain the following theorem.
\begin{thm}
Let $G$ be an r-regular graph of order  $n$ and size $m$. Then$$K(C(G))=\bigg[m-n+\frac{n-1}{n-1+r}+\sum_{i=2}^{n}\frac{2(2n+\lambda_i-1)}{2n+\lambda_i-r}\bigg].$$ 
\end{thm}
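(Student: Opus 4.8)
The plan is to read off the complete normalized Laplacian spectrum of $C(G)$ from Corollary 3.2 and then sum the reciprocals of the nonzero eigenvalues, using Vieta's formulas so that the quadratics involved are never solved explicitly. Set
\[
p_1(x)=(x-1)\Bigl((x-1)+\tfrac{n-1-r}{n-1}\Bigr)-\tfrac{r}{n-1},
\]
and, for $i=2,\dots,n$,
\[
p_i(x)=(x-1)\Bigl((x-1)+\tfrac{-1-\lambda_i}{n-1}\Bigr)-\tfrac{\lambda_i+r}{2(n-1)}.
\]
By Corollary 3.2, $\mathcal{L}(C(G))$ has the eigenvalue $1$ with multiplicity $m-n$, the two roots of $p_1$, and, for each $i=2,\dots,n$, the two roots of $p_i$; this accounts for $(m-n)+2+2(n-1)=m+n$ eigenvalues, i.e.\ all of them.

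First I would locate the zero eigenvalue. A direct substitution gives $p_1(0)=1-\tfrac{n-1-r}{n-1}-\tfrac{r}{n-1}=0$, so $p_1(x)=x\bigl(x-\tfrac{n-1+r}{n-1}\bigr)$ and the second root of $p_1$ is $\tfrac{n-1+r}{n-1}$. Each $p_i$ has constant term $\tfrac{2n+\lambda_i-r}{2(n-1)}$, which is strictly positive since $r\le n-1$ and $\lambda_i\ge-r$ force $2n+\lambda_i-r\ge 2$; hence $0$ is a root of none of the $p_i$, the multiplicity of $\tilde\mu_1=0$ is exactly one, and $K(C(G))=\sum 1/\tilde\mu_i$ runs over the remaining $m+n-1$ eigenvalues.

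Now I would split that sum into its three families. The $m-n$ eigenvalues equal to $1$ contribute $m-n$; the surviving root $\tfrac{n-1+r}{n-1}$ of $p_1$ contributes its reciprocal $\tfrac{n-1}{n-1+r}$. For each $i=2,\dots,n$, expanding $p_i$ into monic form gives $p_i(x)=x^2-\tfrac{2n-1+\lambda_i}{n-1}\,x+\tfrac{2n+\lambda_i-r}{2(n-1)}$, so by Vieta its roots $x_1,x_2$ satisfy $x_1+x_2=\tfrac{2n-1+\lambda_i}{n-1}$ and $x_1x_2=\tfrac{2n+\lambda_i-r}{2(n-1)}$, whence $\tfrac1{x_1}+\tfrac1{x_2}=\tfrac{x_1+x_2}{x_1x_2}=\tfrac{2(2n+\lambda_i-1)}{2n+\lambda_i-r}$. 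Summing over $i$ and adding the two previous contributions reproduces the claimed identity.

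There is no genuine obstacle here; the two points that need care are (i) checking that the unique zero eigenvalue is the one peeled off $p_1$ --- equivalently that $2n+\lambda_i-r>0$ for all $i$, which is precisely what makes $K(C(G))$ finite and the Vieta step legitimate --- and (ii) the routine expansion of the two quadratics into monic form, after which the reciprocal-of-roots formula $\frac{x_1+x_2}{x_1x_2}$ finishes the computation.
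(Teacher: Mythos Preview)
Your proposal is correct and is exactly the approach the paper has in mind: it simply says ``By Corollary 3.2, we can readily obtain the following theorem,'' and you have supplied the omitted details---identifying the zero eigenvalue inside $p_1$, verifying $2n+\lambda_i-r>0$ so the remaining $p_i$ have nonzero roots, and using Vieta to sum reciprocals without solving the quadratics. Nothing further is needed.
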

The following result is obtained from Theorem 5.2.

	Let $G$ be an r-regular graph of order  $n$ and size $m$. Then 
$$Kf^*(C(G))=(2m+n(n-1))\bigg[m-n+\frac{n-1}{n-1+r}+\sum_{i=2}^{n}\frac{2(2n+\lambda_i-1)}{2n+\lambda_i-r}\bigg].$$
By Corollary 4.2, we can readily obtain the following theorem.
\begin{thm}
 Let $G_i$ be an $r_i$-regular graph with $n_i$ vertices and $m_i$ edges  for i=1,2. Then \begin{align*}
 K( G_1\dot{\vee} G_2)=\bigg[m_1-n_1+\frac{n_1(2n_1+3n_2+r_1+r_2)+2n_2r_2-2n_1-r_2-r_1r_2}{n_1(n_1+2n_2-2)+n_2r_2}+\\\sum_{i=2}^{n_2}\frac{r_2+n_1}{n_1+r_2\tilde{\mu}_i(G_2)}+\sum_{i=2}^{n_1}\frac{2(-r_1\tilde{\mu}_i(G_1)+2n_1+2n_2+r_1-1)}{2n_1+2n_2-r_1\tilde{\mu}_i(G_1)}\bigg].
 \end{align*} 
\end{thm}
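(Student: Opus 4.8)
The plan is to read off the full normalized Laplacian spectrum of $G_1\dot{\vee}G_2$ from Corollary~4.2 and then add up the reciprocals of its nonzero eigenvalues, since by definition $K(G)=\sum_{i=2}^{n}\frac{1}{\tilde{\mu}_i(G)}$ is precisely that sum. Corollary~4.2 splits the spectrum into four blocks: the eigenvalue $1$ with multiplicity $m_1-n_1$; for each $i=2,\dots,n_2$ the root of the linear equation $\left(x-\frac{n_1}{r_2+n_1}\right)-\frac{r_2}{r_2+n_1}\tilde{\mu}_i(G_2)=0$; the three roots of one fixed cubic; and for each $i=2,\dots,n_1$ the two roots of a quadratic depending on $\tilde{\mu}_i(G_1)$. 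I would compute the contribution of each block to $K(G_1\dot{\vee}G_2)$ separately and then add.

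The first two blocks are immediate. The $m_1-n_1$ copies of $1$ contribute $m_1-n_1$. For the linear block the root is $x_i=\frac{n_1+r_2\tilde{\mu}_i(G_2)}{r_2+n_1}$, so the reciprocals sum to $\sum_{i=2}^{n_2}\frac{r_2+n_1}{n_1+r_2\tilde{\mu}_i(G_2)}$, the third term of the claimed formula. For the quadratic block, fix $i$ and expand the equation (writing $(x-1)^2=x^2-2x+1$) into the form $ax^2+bx+c=0$; then by Vieta's formulas the sum of the reciprocals of its two roots is $\frac{\alpha+\beta}{\alpha\beta}=-\frac{b}{c}$. Collecting the $\frac{1}{n_1+n_2-1}$-terms, $-\frac{b}{c}$ simplifies to $\frac{2\left(2n_1+2n_2+r_1-1-r_1\tilde{\mu}_i(G_1)\right)}{2n_1+2n_2-r_1\tilde{\mu}_i(G_1)}$, and summing over $i=2,\dots,n_1$ gives the fourth term.

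The substantive step is the cubic block. Since $\tilde{\mu}_1=0$ is always a normalized Laplacian eigenvalue, while the eigenvalue $1$, every root in the linear block, and every root in the quadratic block are easily seen to be nonzero, the zero eigenvalue must be one of the three roots of the cubic. Thus I would first check that $x=0$ solves the cubic — on substituting $x=0$ both summands reduce to $-\frac{n_1(n_1+n_2)}{(n_1+n_2-1)(n_1+r_2)}$ — and then write the cubic as $x\,q(x)$ with $q$ a quadratic; the two remaining eigenvalues of this block are the roots of $q$, and by Vieta their reciprocal sum equals $-b_2/b_1$, where $b_2$ and $b_1$ denote the coefficients of $x^2$ and of $x$ in the cubic. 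To obtain $b_2$ and $b_1$ I would clear denominators by multiplying the cubic through by $(n_1+n_2-1)(n_1+r_2)$, set $u=x-1$ and use $x(n_1+r_2)-n_1=(n_1+r_2)u+r_2$ to rewrite the cubic as $\big((n_1+r_2)u+r_2\big)\big((n_1+n_2-1)u^2-(1+r_1)u-r_1\big)-n_1u\big(n_2-r_2-(n_1+r_2)u\big)$, expand and collect powers of $u$, re-expand in $x=u+1$, and read off $b_2$ and $b_1$; simplifying $-b_2/b_1$ produces the middle fraction of the statement. Adding the four contributions $m_1-n_1$, this middle fraction, $\sum_{i=2}^{n_2}(\cdots)$ and $\sum_{i=2}^{n_1}(\cdots)$ then yields $K(G_1\dot{\vee}G_2)$.

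The only real obstacle is the bookkeeping in the cubic step: expanding the degree-three polynomial while correctly tracking the dependence on $r_1,r_2,n_1,n_2$, and then simplifying the ratio $-b_2/b_1$ to the advertised closed form — this is exactly where a sign or coefficient slip would propagate. The remaining pieces (the two Vieta computations for the linear and quadratic blocks, and the trivial $m_1-n_1$ contribution) are routine.
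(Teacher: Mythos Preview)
Your proposal is correct and is exactly the argument the paper has in mind: the paper gives no detailed proof of this theorem, merely stating that it follows readily from Corollary~4.2, and your write-up carries out precisely that computation (Vieta on the quadratic and cubic blocks, after checking that $x=0$ is the root of the cubic accounting for $\tilde{\mu}_1=0$). The bookkeeping you flag in the cubic step is indeed the only place requiring care, but the strategy is right and nothing is missing.
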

The following result is obtained from Theorem 5.3.\\
	Let $G_i$ be an $r_i$-regular graph with $n_i$ vertices and $m_i$ edges for $i=1,2.$\\ Then
\begin{align*}
Kf^*( G_1\dot{\vee} G_2)=&2m_1+2m_2+2n_1n_2+n_1(n_1-1)\\&\bigg[m_1-n_1+\frac{n_1(2n_1+3n_2+r_1+r_2)+2n_2r_2-2n_1-r_2-r_1r_2}{n_1(n_1+2n_2-2)+n_2r_2}+\\&\sum_{i=2}^{n_2}\frac{r_2+n_1}{n_1+r_2\tilde{\mu}_i(G_2)}+\sum_{i=2}^{n_1}\frac{2(-r_1\tilde{\mu}_i(G_1)+2n_1+2n_2+r_1-1)}{2n_1+2n_2-r_1\tilde{\mu}_i(G_1)}\bigg].
\end{align*}
By Corollary 4.4, we get the following theorem.
\begin{thm}
	Let $G_i$ be an $r_i$-regular graph with $n_i$ vertices and $m_i$ edges  for i=1,2. Then \begin{align*}
	&K( G_1{\veebar} G_2)=\bigg[m_1-n_1-1+\frac{r_2(4n_1n_2-n_2^2+6n_2r_1+4n_1-4n_2+4r_1-4)}{m_1(2n_2^2r_1+2n_1n_2+6n_2r_1+4n_1+2n_2)+2n_2r_1+n_2^2r_1r_2}\\&\frac{n_2^2(n_1r_2+2n_1m_1+2r_1r_2+3m_1r_1)+m_1(10n_1n_2-2n_2^2+10n_2r_1+12n_1-10n_2+8r_1-12}{m_1(2n_2^2r_1+2n_1n_2+6n_2r_1+4n_1+2n_2)+2n_2r_1+n_2^2r_1r_2}+\\&\sum_{i=2}^{n_2}\frac{r_2+m_1}{m_1+r_2\tilde{\mu}_i(G_2)}+\sum_{i=2}^{n_1}\frac{-n_2r_1\tilde{\mu}_i(G_1)-2r_1\tilde{\mu}_i(G_1)+2n_1n_2+n_2r_1+4n_1-n_2+2r_1-2}{n_1n_2+n_2r_1+2n_1-n_2r_1\tilde{\mu}_i(G_1)-3r_1\tilde{\mu}_i(G_1)}\bigg].
	\end{align*} 
\end{thm}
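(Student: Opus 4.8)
The plan is to apply the Kemeny's constant formula $K(G)=\sum_{i=2}^{n}\frac{1}{\tilde\mu_i(G)}$ directly to the normalized Laplacian spectrum of $G_1\veebar G_2$ furnished by Corollary~4.4, summing the reciprocals of all nonzero eigenvalues group by group. First I would note that $G_1\veebar G_2$ has $N=m_1+n_1+n_2$ vertices, so there are $N-1$ eigenvalues to account for (one zero eigenvalue is discarded). The contribution of the eigenvalue $1$ of multiplicity $m_1-n_1-1$ is simply $m_1-n_1-1$. The next block consists of the $n_2-1$ roots (for $i=2,\dots,n_2$) of the linear equations $(x-\tfrac{m_1}{r_2+m_1})-\tfrac{r_2}{r_2+m_1}\tilde\mu_i(G_2)=0$; each such root is $x_i=\tfrac{m_1+r_2\tilde\mu_i(G_2)}{r_2+m_1}$, so $\sum 1/x_i=\sum_{i=2}^{n_2}\tfrac{r_2+m_1}{m_1+r_2\tilde\mu_i(G_2)}$, exactly the first sum in the claimed formula.

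Next I would handle the $n_1-1$ quadratic factors indexed by $i=2,\dots,n_1$: each contributes two roots, and the sum of reciprocals of the two roots of $ax^2+bx+c=0$ is $-b/c$ (by Vieta). Writing the quadratic $(x-1)^2+\tfrac{(-1-r_1+r_1\tilde\mu_i(G_1))(x-1)}{n_1-1}-\tfrac{2r_1-r_1\tilde\mu_i(G_1)}{(n_1-1)(n_2+2)}=0$ in standard form in the variable $x$ (expand $(x-1)^2=x^2-2x+1$ and collect), I extract its constant term and linear coefficient and form $-b/c$; after clearing denominators this should simplify to $\tfrac{-n_2r_1\tilde\mu_i(G_1)-2r_1\tilde\mu_i(G_1)+2n_1n_2+n_2r_1+4n_1-n_2+2r_1-2}{n_1n_2+n_2r_1+2n_1-n_2r_1\tilde\mu_i(G_1)-3r_1\tilde\mu_i(G_1)}$, i.e. the last sum in the statement. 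I would carry this out once symbolically and present the intermediate standard-form coefficients.

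Finally, the quartic factor from item~3 of Corollary~4.4 contributes four roots (but one of them is the discarded zero eigenvalue, so effectively three reciprocals enter $K$). The cleanest route is: the sum of reciprocals of \emph{all four} roots of a quartic $p(x)=a_4x^4+\dots+a_1x+a_0$ with $a_0=0$ is undefined, so instead I would factor out $x$ from the quartic polynomial $Q(x)$ — verifying first that $x=0$ is indeed a root, which follows because $G_1\veebar G_2$ is connected (so $0$ is a simple normalized-Laplacian eigenvalue and must appear among these four roots) — obtaining $Q(x)=x\cdot q(x)$ with $q$ a cubic, and then $\sum_{\text{nonzero roots}}1/x = -q'(0)\!\big/\!\text{(constant term of }q) $, equivalently $-(\text{coeff. of }x^2\text{ in }Q)/(\text{coeff. of }x\text{ in }Q)$ after the factorization. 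Expanding $Q(x)$ (multiply the two quadratic-type brackets and subtract the constant $\tfrac{n_2r_1^2n_1}{(n_1-1)(n_2+2)^2(r_2+m_1)}$, then clear the denominator $(r_2+m_1)$ appearing in $x-\tfrac{m_1}{r_2+m_1}$) and reading off the two lowest-degree coefficients gives the big middle fraction in the theorem. Adding the four pieces — $m_1-n_1-1$, the $G_2$-sum, the quartic fraction, and the $G_1$-sum — yields the stated expression. The main obstacle is purely the bookkeeping in the last step: correctly expanding the degree-four polynomial $Q(x)$, confirming the vanishing of its constant term, and simplifying the ratio of its $x^2$- and $x$-coefficients into the displayed closed form without sign or factor errors.
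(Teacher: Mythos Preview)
Your proposal is correct and is exactly the approach the paper intends: the paper gives no proof beyond the sentence ``By Corollary 4.4, we get the following theorem,'' and your plan of summing reciprocals group-by-group via Vieta's formulas (with the zero eigenvalue removed from the quartic factor) is precisely how that corollary is meant to be unpacked. Your handling of each block---the $m_1-n_1-1$ ones, the linear $G_2$-factors, the quadratic $G_1$-factors via $-b/c$, and the quartic via $-(\text{coeff.\ of }x^2)/(\text{coeff.\ of }x)$ after factoring out $x$---is sound; the only work left is the algebraic expansion you already identify as the main obstacle.
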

The following result is obtained  from Theorem 5.4.

	Let $G_i$ be an $r_i$-regular graph with $n_i$ vertices and $m_i$ edges for $i=1,2.$ Then\begin{align*}
	&Kf^*( G_1{\veebar} G_2)=2m_1+2m_2+2m_1m_2+n_1(n_1-1)\\&\bigg[m_1-n_1-1+\frac{r_2(4n_1n_2-n_2^2+6n_2r_1+4n_1-4n_2+4r_1-4)}{m_1(2n_2^2r_1+2n_1n_2+6n_2r_1+4n_1+2n_2)+2n_2r_1+n_2^2r_1r_2}\\&\frac{n_2^2(n_1r_2+2n_1m_1+2r_1r_2+3m_1r_1)+m_1(10n_1n_2-2n_2^2+10n_2r_1+12n_1-10n_2+8r_1-12)}{m_1(2n_2^2r_1+2n_1n_2+6n_2r_1+4n_1+2n_2)+2n_2r_1+n_2^2r_1r_2}+\\&\sum_{i=2}^{n_2}\frac{r_2+m_1}{m_1+r_2\tilde{\mu}_i(G_2)}+\sum_{i=2}^{n_1}\frac{-n_2r_1\tilde{\mu}_i(G_1)-2r_1\tilde{\mu}_i(G_1)+2n_1n_2+n_2r_1+4n_1-n_2+2r_1-2}{n_1n_2+n_2r_1+2n_1-n_2r_1\tilde{\mu}_i(G_1)-3r_1\tilde{\mu}_i(G_1)}\bigg].
	\end{align*}

\section{Conclusion}
 In this paper, the normalized Laplacian spectrum of central graph,  central vertex join and central edge join of two regular graphs are calculated. Also, we construct non-regular $\mathcal{L}$-cospectral  graphs. Moreover, we investigate some graph invariants like the degree Kirchhoff index and the Kemeny's constant   of central graph,  central vertex join and central edge join of two regular graphs.
\bibliography{refecentral}

\begin{thebibliography}{10}

\bibitem{gutman2014randic}
Ivan Gutman, Boris Furtula, and {\c{S}}~Burcu Bozkurt.
\newblock On {R}andi{\'c} energy.
\newblock {\em Linear Algebra and its Applications}, 442:50--57, 2014.

\bibitem{bozkurt2010randic}
S~Burcu Bozkurt, A~Dilek G{\"u}ng{\"o}r, Ivan Gutman, and A~Sinan Cevik.
\newblock Rand{\'c} matrix and {R}andi{\'c} energy.
\newblock {\em MATCH Commun. Math. Comput. Chem}, 64(1):239--250, 2010.

\bibitem{klein1993resistance}
Douglas~J Klein and Milan {R}andi{\'c}.
\newblock Resistance distance.
\newblock {\em Journal of mathematical chemistry}, 12(1):81--95, 1993.

\bibitem{chen2007resistance}
Haiyan Chen and Fuji Zhang.
\newblock Resistance distance and the normalized {L}aplacian spectrum.
\newblock {\em Discrete Applied Mathematics}, 155(5):654--661, 2007.

\bibitem{butler2016algebraic}
Steve Butler.
\newblock Algebraic aspects of the normalized {L}aplacian.
\newblock In {\em Recent Trends in Combinatorics}, pages 295--315. Springer,
  2016.

\bibitem{hunter2014role}
Jeffrey~J Hunter.
\newblock The role of { K}emeny's constant in properties of {M}arkov chains.
\newblock {\em Communications in Statistics-Theory and Methods},
  43(7):1309--1321, 2014.

\bibitem{horn1991topics}
Roger~A Horn and Charles~R Johnson.
\newblock Topics in matrix analysis. {C}ambridge university press.
\newblock {\em Cambridge, UK}, 1991.

\bibitem{cvetkovic1980spectra}
Drago{\v{s}}~M Cvetkovi{\'c}, Michael Doob, and Horst Sachs.
\newblock {\em Spectra of graphs, theory and applications}, volume~10.
\newblock Academic Press, New York, 1980.

\bibitem{cvetkovic2010introduction}
Dragos~M Cvetkovi{\'c}, Peter Rowlinson, and Slobodan Simic.
\newblock {\em An introduction to the theory of graph spectra}.
\newblock Cambridge University Press Cambridge, 2010.

\bibitem{vivin2008harmonious}
J~Vernold Vivin, MM~Akbar Ali, and K~Thilagavathi.
\newblock On harmonious coloring of central graphs.
\newblock {\em Advances and applications in discrete mathematics}, 2(1):17--33,
  2008.

\bibitem{das2018spectra}
Arpita Das and Pratima Panigrahi.
\newblock Spectra of {R}-vertex join and {R}-edge join of two graphs.
\newblock {\em Discussiones Mathematicae-General Algebra and Applications},
  38(1):19--32, 2018.

\bibitem{tian2017normalized}
Gui-xian Tian and Shu-Yu Cui.
\newblock On the normalized {L}aplacian spectra of some subdivision joins of
  two graphs.
\newblock {\em arXiv:1703.06595}, 2017.

\bibitem{mcleman2011spectra}
Cam McLeman and Erin McNicholas.
\newblock Spectra of coronae.
\newblock {\em Linear algebra and its applications}, 435(5):998--1007, 2011.

\bibitem{liu2019spectra}
Xiaogang Liu and Zuhe Zhang.
\newblock Spectra of subdivision-vertex join and subdivision-edge join of two
  graphs.
\newblock {\em Bulletin of the Malaysian Mathematical Sciences Society},
  42(1):15--31, 2019.

\bibitem{jahfar2020central}
TK~Jahfar and AV~Chithra.
\newblock Central vertex join and central edge join of two graphs.
\newblock {\em AIMS Mathematics}, 5(6):7214--7233, 2020.

\bibitem{van2003graphs}
Edwin~R Van~Dam and Willem~H Haemers.
\newblock Which graphs are determined by their spectrum?
\newblock {\em Linear Algebra and its applications}, 373:241--272, 2003.

\end{thebibliography}
\bibliographystyle{unsrt}
\end{document}